\newcommand*{\rom}[1]{\expandafter\@slowromancap\romannumeral #1@}
      \theoremstyle{plain}
      \newtheorem{theorem}{Theorem}[section]
      \newtheorem{lemma}[theorem]{Lemma}
      \newtheorem{corollary}[theorem]{Corollary}
      \newtheorem{proposition}[theorem]{Proposition}
      \theoremstyle{definition}
      \newtheorem{definition}[theorem]{Definition}
      \theoremstyle{remark}
      \newtheorem{remark}[theorem]{Remark}
\newcommand{\rsl}{{\mathrm{PGL}}}
\newcommand{\pgl}{\mathrm{PGL}(m, \mathbb R)}
\newcommand{\cal}{\mathcal}
\newcommand{\rp}{\mathbb{RP}}
\newcommand{\ra}{\rightarrow}
\newcommand{\flagv}{\mathrm{Flag}(\sigma_{mod})}
\newcommand{\R}{\mathbb{R}}
\let\cal\mathcal
\def\kim{\color{black}}
      \def\@setcopyright{}
      \def\serieslogo@{}
\begin{document}

%



 \author{Inkang Kim}
   \address{School of Mathematics, Korea Institute for Advanced Study, Hoegiro 85, Dongdaemun-gu,
   Seoul, 02455, Republic of Korea}
   \email{inkang@kias.re.kr}

   \author{Sungwoon Kim}
   \address{Department of Mathematics, Jeju National University, 102 Jejudaehak-ro, Jeju, 63243, Republic of Korea}
   \email{sungwoon@jejunu.ac.kr}






   \title[]{Primitive stable representations in higher rank semisimple Lie groups}


\begin{abstract}
We study primitive stable representations of free groups into higher rank semisimple Lie groups and their properties. Let $\Sigma$ be a compact, connected, orientable surface (possibly with boundary) of negative Euler characteristic. We first verify $\sigma_{mod}$-regularity for convex projective structures and positive representations. Then we show that the holonomies of convex projective structures and positive representations on $\Sigma$ are all primitive stable if $\Sigma$ has one boundary component.
\end{abstract}

\footnotetext[1]{2000 {\sl{Mathematics Subject Classification: 32G15, 22F30, 20E05}}
}

\footnotetext[2]{{\sl{Key words and phrases. Primitive stable, Morse action, Positive representation. }}
}

\footnotetext[3]{I. Kim gratefully acknowledges the partial support of grant (NRF-2017R1A2A2A05001002) and KIAS Individual Grant (MG031408), and a warm support of UC Berkeley during his stay. S. Kim gratefully acknowledges supports from the 2020 scientific promotion program by Jeju National University and the Basic Science Research Program through the National Research Foundation of Korea funded by the Ministry of Education, Science and Technology (NRF-2015R1D1A1A09058742).}


   \keywords{}

   \thanks{}
   \thanks{}

   \dedicatory{}

   \date{}


   \maketitle



\section{Introduction}

Recently, much attention has been paid to the generalization of convex cocompact groups in rank one symmetric spaces to
higher rank symmetric spaces. The successful story along this line is Anosov representations which was introduced by Labourie \cite{La} and developed further by Guichard and Wienhard \cite{GW}.
In the recent paper \cite{KLP2} of Kapovich, Leeb and Porti,  more geometric criteria for Anosov representations are given. Among those properties, the concept of Morse actions of word hyperbolic groups is outstanding \cite{KLP2}.
On the other hand, Minsky \cite{Minsky} proposed the notion of primitive stable representations in real hyperbolic 3-space.
Combining these notions, one can extend the notion of primitive stable representations of free groups to higher rank semisimple Lie groups \cite{GGKW, KLP2}.
 In the case of $\mathrm{PSL}(2,\mathbb C)$, see \cite{K,KL} for criteria of primitive stability for handlebodies and its generalization to compression bodies.

Let $G$ be a higher rank semisimple Lie group without compact factors, $X$ the associated symmetric space, and $\Gamma$ a free group of rank $r$.
The definition of primitive stable representation has been already mentioned by Gu\'eritaud-Guichard-Kassel-Wienhard in \cite[Remark 1.6(b)]{GGKW}.
In the paper, in order to define primitive stable representation, we will use the concept of Morse quasigeodesic which is introduced by Kapovich, Leeb and Porti in \cite{KLP2}.
A representation $\rho:\Gamma \rightarrow G$ is said to be \emph{primitive stable} if any bi-infinite geodesic in the Cayley graph of $\Gamma$ defined by
a primitive element is mapped under the orbit map to a uniformly Morse quasigeodesic in $X$.
Let $\mathcal{PS}(\Gamma,G)$ be the set of conjugacy classes of primitive stable representations. 
Then, the openness of $\mathcal{PS}(\Gamma,G)$ directly follows from the stability of the Morse property in \cite[Section 7]{KLP}. Furthermore it easily follows from the argument of Minsky \cite{Minsky} that the action of the outer automorphism $Out(\Gamma)$ of $\Gamma$ on $\mathcal{PS}(\Gamma,G)$ is properly discontinuous.
Hence one easily has the following.

\begin{theorem}\label{thm:di} Let $\Gamma$ be a free group and $G$  a semisimple Lie group without compact factors. Then the set $\mathcal{PS}(\Gamma,G)$ of primitive stable representations of $\Gamma$ in $G$ is open in the character variety of $\Gamma$ in $G$, and the action of the outer automorphism $Out(\Gamma)$ of $\Gamma$ on $\mathcal{PS}(\Gamma,G)$ is properly discontinuous.
\end{theorem}

{\kim Theorem \ref{thm:di} means that the set of primitive stable representations of $\Gamma$ in $G$ is a domain of discontinuity for the action of $Out(\Gamma)$ on the character variety of $\Gamma$ in $G$ which is strictly larger than the set of Anosov representations of $\Gamma$. Indeed, there have been many studies on domain of discontinuity for the $Out(\Gamma)$-action.
In higher rank,} Canary-Lee-Stover \cite{CLS} studied amalgam Anosov representations for a one-ended torsion free hyperbolic group $\Gamma$ to show that they form a domain of discontinuity for the action of $Out(\Gamma)$. {\kim However the class of amalgam Anosov representations does not include the primitive stable representations of free groups.}

From the definition of primitive stable representation, it is clear that any Anosov representation is primitive stable.
The main point of the paper is to give  concrete examples of primitive stable representations which are not Anosov representations.
Here are our main theorems.

\begin{theorem} \label{mainthm}Let $\Sigma$ be compact, connected, orientable surface with one boundary component and negative Euler characteristic. Then the holonomy representations of convex projective structures on $\Sigma$ are primitive stable. Furthermore, every positive representation of $\Sigma$ in $\mathrm{PGL}(m,\R)$ is primitive stable.
\end{theorem}



There are three kinds of the holonomies of convex projective structures; Anosov representations, Positive representations and Quasi-hyperbolic representations. The holonomies of convex projective structures with hyperbolic geodesic boundary are  Anosov representations as well as positive representations, and the holonomies of convex projective structures of finite Hilbert volume with cusps are positive but not Anosov representations. Every quasi-hyperbolic representation, which is the holonomy of a convex projective structure with quasi-hyperbolic element, admits an equivariant continuous map $\partial_\infty \pi_1(\Sigma) \ra \flagv$. But the map is not antipodal due to the  property of quasi-hyperbolic element. See Section \ref{convex} for more details.
Hence quasi-hyperbolic representations are neither Anosov representations nor positive representations but they are primitive stable. Theorem \ref{mainthm} gives examples of primitive stable representations which are neither Anosov representations nor positive representations. Positive representations with unipotent element are not Anosov representations.
Hence we have the following corollary.

\begin{corollary}
Let $\Gamma$ be a non-abelian free group of even rank. Then there is an open subset of the character variety of $\Gamma$ in $\pgl$, strictly larger than the set of Anosov representations, which is $Out(\Gamma)$-invariant, and on which $Out(\Gamma)$ acts properly discontinuously.
\end{corollary}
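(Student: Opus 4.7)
The plan is to take the required open subset of $\mathcal{X}(\Gamma, \pgl)$ to be $\mathcal{PS}(\Gamma, \pgl)$ itself. Three of the four desired properties are handed over directly by Theorem \ref{thm:disc}: $\mathcal{PS}(\Gamma, \pgl)$ is open in the character variety, and $Out(\Gamma)$ acts on it properly discontinuously. The $Out(\Gamma)$-invariance is built into the notion of primitive stability, since an automorphism of $\Gamma$ permutes primitive elements (these being the elements belonging to some free basis), and hence carries the defining bi-infinite geodesics into one another.

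Next I would verify the inclusion Anosov $\subseteq \mathcal{PS}(\Gamma, \pgl)$ directly from the definitions. By the Kapovich-Leeb-Porti characterization, an Anosov representation sends \emph{every} bi-infinite geodesic in the Cayley graph, under the orbit map, to a uniformly Morse quasigeodesic in the symmetric space $X$; restricting this property to the sub-collection of bi-infinite geodesics defined by primitive elements is trivially valid, so Anosov representations are primitive stable.

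The heart of the corollary is strict containment, and this is where Theorem \ref{pos} does the work. I would realize $\Gamma$ as $\pi_1(\Sigma)$ for $\Sigma$ a compact orientable surface with one boundary component; then Theorem \ref{pos} tells us every positive representation of $\pi_1(\Sigma)$ into $\pgl$ lies in $\mathcal{PS}(\Gamma, \pgl)$. Among the positive representations are holonomies of convex projective structures of finite Hilbert volume with cusps (boundary holonomy unipotent), as well as the quasi-hyperbolic holonomies discussed after Theorem \ref{mainthm}; these are not Anosov, but Theorem \ref{pos} places them in $\mathcal{PS}(\Gamma, \pgl)$. Any one such representation witnesses the strict inequality.

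The only genuine subtlety, rather than any real obstacle, is that the surface-group realization above yields a free group of even rank. For arbitrary non-abelian free $\Gamma$, one can fall back on the openness from Theorem \ref{thm:disc}: it suffices to exhibit a single non-Anosov point in $\mathcal{PS}(\Gamma, \pgl)$, and this can be produced either by embedding an even-rank free surface group into $\Gamma$ (extending the representation trivially on the remaining generators and then deforming into $\mathcal{PS}$ using openness), or by pulling back a non-Anosov positive example along a surjection $\Gamma \twoheadrightarrow F_{2g}$ onto a surface group. In either construction the qualitative conclusion is unchanged, and the combination of Theorems \ref{thm:disc} and \ref{pos} immediately produces the open, invariant, strictly-larger-than-Anosov subset claimed.
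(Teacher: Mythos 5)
Your main line of argument is exactly the paper's: the paper derives the corollary ``immediately'' from Theorem \ref{thm:disc} (openness and proper discontinuity of the $Out(\Gamma)$-action on $\mathcal{PS}(\Gamma,G)$) together with Theorem \ref{pos} (positive representations of a one-boundary surface group are primitive stable), with the non-Anosov positive representations (e.g.\ cusped finite-volume convex projective holonomies) witnessing strictness. Your observations that invariance is built into the definition and that Anosov implies primitive stable are correct and implicit in the paper.

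The one place you go beyond the paper is the rank issue, and there your proposed repairs do not work. If $\Gamma$ has odd rank and you extend a representation of an even-rank surface group by sending the extra generators to the identity, or pull back along a surjection $\Gamma\twoheadrightarrow F_{2g}$ with nontrivial kernel, then some primitive element of $\Gamma$ (e.g.\ a killed basis element) maps to the identity; its bi-infinite geodesic is sent to a bounded set, which is not a Morse quasigeodesic, so the resulting representation is \emph{not} in $\mathcal{PS}(\Gamma,\pgl)$, and openness cannot rescue a point that is not in the open set to begin with. To be fair, the paper itself silently restricts to $\Gamma=\pi_1(\Sigma)$ with $\Sigma$ a compact orientable surface with one boundary component (hence even rank), so you have put your finger on a genuine imprecision in the statement; but your fix for odd rank would need a different construction (for instance, exhibiting a non-Anosov primitive stable representation of $F_n$ directly, or using a non-orientable surface), not the trivial extension or pullback you describe.
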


What is essential in the proof of Theorem \ref{mainthm} is $\sigma_{mod}$-regularity. We refer the reader to \cite[Section 4.2]{KLP2} for details concerning $\sigma_{mod}$-regularity.
The regularities of Anosov representations are {\kim actually due to Labourie \cite{La} and Guichard-Wienhard \cite{GW}}. However it is not easy to verify the regularity for representations which are not Anosov.
To study the $\sigma_{mod}$-regularity, we first deal with convex projective structures.

\begin{theorem}\label{cr}
Let $\Sigma$ be a compact, connected, orientable surface with boundary and negative Euler characteristic. Let $\rho :\pi_1(\Sigma) \ra \mathrm{PGL}(3,\mathbb R)$ be the holonomy of a convex projective structure on the interior of $\Sigma$. {\kim If $\rho$ is a positive representation, $\rho$ is uniformly $\sigma_{mod}$-regular. Otherwise $\rho$ is $\sigma_{mod}$-regular but not uniformly $\sigma_{mod}$-regular.}
\end{theorem}

{\kim For positive representations $\rho :\pi_1(\Sigma) \ra \mathrm{PGL}(3,\mathbb R)$ with only hyperbolic boundary holonomy, Theorem \ref{cr} can be obtained by doubling the convex projective surface. Indeed, this case is just the Anosov case. For the other convex projective structures, Theorem \ref{cr} is not immediate. We emphasize that we here deal with the $\sigma_{mod}$-regularity for convex projective structures which are not Anosov.}

Furthermore, we generalize a tool in $\mathrm{PGL}(3,\mathbb R)$ to check $\sigma_{mod}$-regularity to general $\pgl$.
More precisely, we find out how to see $\sigma_{mod}$-regularity for a given boundary embedded representation. Here a boundary embedded representation means a representation $\rho :\Gamma \ra G$ for a (relatively) hyperbolic group $\Gamma$ which admits a $\rho$-equivariant homeomorphism $\xi : \partial_\infty \Gamma \ra G/P$ for some parabolic subgroup $P$ of $G$. By applying the tool to positive representations, we have the following.

\begin{proposition}\label{positiveregular}
Every positive representation in $\pgl$ is $\sigma_{mod}$-regular.
\end{proposition}

Both Theorem \ref{cr} and Proposition \ref{positiveregular} might be important in studying the notion of relatively Anosov representations in future since positive representations with unipotent elements have been regarded as important examples in developing the notion of relatively Anosov representation.

{\kim 
Let $S=\mathbb H^2/\Gamma$ be a hyperbolic surface possibly with boundary. We say that a representation $\rho : \Gamma\to \pgl$ is \emph{type-preserving} if $\rho$ sends hyperbolic elements  to positive hyperbolic (i.e. diagonalizable with distinct positive real eigenvalues),  unipotent elements to unipotent.
The $i$th simple root length for a positive hyperbolic element $\gamma\in \Gamma$ is defined by
\[\ell_i(\gamma):=\ln \frac{\lambda_i(\rho(\gamma))}{\lambda_{i+1}(\rho(\gamma))}\]
where $\lambda_1(\rho(\gamma))>\lambda_2(\rho(\gamma))>\cdots >\lambda_m(\rho(\gamma))$ are the eigenvalues of  a positive hyperbolic element $\rho(\gamma)$. The simple $\ell_i$-spectrum is defined as the set of $i$th simple root lengths of oriented closed geodesics on $S$. 

If a positive representation is Anosov, the discreteness of its simple $\ell_i$-spectrum is obtained from the property of Anosov representation. On the other hand, the discreteness is not obvious for the other positive representations which have at least one unipotent boundary holonomy. As a corollary of Proposition \ref{positiveregular}, we show that:

\begin{corollary}\label{cor:1.6} 
Let $S=\mathbb H^2/\Gamma$ be a hyperbolic surface with boundary and $\rho: \Gamma\to \pgl$ be a type-preserving positive representation.
Then the simple $\ell_i$-spectrum for $\rho$ is discrete for all $i=1,\ldots,m-1$.
\end{corollary}

Huang and Sun proved a weaker version of this corollary in \cite[Theorem 1.22]{HS} with a different method. 
They show the discreteness of the simple $\ell_i$-spectrum for oriented simple closed geodesics, not for all oriented closed geodesics.

}

\section{Preliminaries}


In this section, we collect basic notions and results that are necessary to define and study primitive stable representations in higher rank semisimple Lie groups.

\subsection{Free group, primitive element and Whitehead Lemma}\label{blocking}
 Given a generating set $\{s_1,\ldots,s_r\}$ of a non-abelian free
group $\Gamma$, let $S$ denote the set $\{s_1^\pm,\ldots,s_r^\pm\}$. The Cayley graph $\mathcal C(\Gamma,S)$ is a 1-dimensional tree
whose vertices are words in $S$, and two words $v$ and $w$ are
connected by a length one edge if and only if $v=ws$ for some $s \in S$.
A group $\Gamma$ acts on its Cayley graph on the left, i.e., for $\gamma \in \Gamma$,
$$ (\gamma, w)\rightarrow   \gamma w.$$
This action is an isometry since for any $s \in S$,
$$d(w, v)=d(sw, sv),$$  where $d(\cdot,\cdot)$ is the distance induced by the word metric on  $\mathcal C(\Gamma,S)$.

An element of $\Gamma$ is called \emph{primitive} if it is a member of a free generating set. Each conjugacy class $[w]$ in $\Gamma$ determines
$\overline w$, the periodic word determined by concatenating infinitely many copies of a cyclically reduced representative of $w$. 
A cyclically reduced word $w$ defines a unique invariant line $\widetilde w$ through the origin $e$ in the Cayley graph.
Each $\overline w$ lifts to  a $\Gamma$-invariant set of bi-infinite geodesics in $\mathcal C(\Gamma,S)$. Indeed, this invariant set is the orbit of
$\widetilde w$ under the action of $\Gamma$.

Let $\overline{\mathcal P}$ denote the set consisting of $\overline w$ for conjugacy classes $[w]$ of primitive elements of $\Gamma$ and
 ${\mathcal P}$  the set of all bi-infinite geodesics $q :\mathbb Z \rightarrow \Gamma$ in $\mathcal C(\Gamma,S)$ lifted from $\overline w$ for all primitive elements $w$ in $\Gamma$.
Let $\mathcal B$ be the set of  all bi-infinite geodesics in the Cayley graph of $\Gamma$.
 It is known that $\mathcal P$ is a strict subset of ${ \mathcal B}$.

For a word $\gamma \in \Gamma$, the Whitehead graph $Wh(\gamma, S)$ is the graph with $2r$ vertices labeled $s_1,s_1^{-1},\ldots,s_r,s_r^{-1}$, and edge from $v$ to $w^{-1}$ for each string $vw$ that appears in $\gamma$ or in a cyclic permutation of $\gamma$. Whitehead proved that
for a cyclically reduced word $\gamma$, if $Wh(\gamma,S)$ is connected and has no cutpoint, then $\gamma$ is not primitive. We refer the reader to \cite{Wh1,Wh2} for more information. One says that a reduced word $\gamma$ is \emph{primitive-blocking} if it does not appear as a subword of any cyclically reduced primitive word and \emph{blocking} if some power $\gamma^m$ is primitive-blocking. An immediate corollary of the Whitehead lemma is that for a once-punctured surface, the cusp curve $c$ is blocking. Indeed $c^2$ cannot appear as a subword of any primitive element, since $Wh(c^2, S)$ is a cycle \cite{Minsky}.

\subsection{Cartan projection, Lyapunov projection}
Let $\mathfrak a$ be the set of real  traceless diagonal $m\times m$ matrices and $\bar{\mathfrak a}^+$ be the set of elements of $\mathfrak a$ whose diagonal entries are in nonincreasing order. Let $G=\pgl$ and $K=\mathrm{PO}(m)$. Then the Cartan decomposition $K(\exp \bar{\mathfrak a}^+)K$ means that each $g \in G$ can be written by $g=k (\exp \mu(g) ) k'$ for some $k, k' \in K$ and a unique $\mu(g) \in \bar{\mathfrak a}^+$. A Cartan projection $$\mu : G \ra \bar{\mathfrak a}^+$$ is defined as the map sending $g\in G$ to $\mu(g)$.

An element $g \in G$ can be uniquely written as $$g=ehu$$ where $e$ is elliptic (all its complex eigenvalues have modulus 1), $h$ is hyperbolic (all the eigenvalues are real and positive) and $u$ is unipotent, and all three commute \cite[Lemma 7.1]{Hel}. This decomposition is called the \emph{Jordan decomposition} of $g$.
The \emph{Lyapunov projection} $\lambda : G \rightarrow \bar{\mathfrak a}^+$ is induced from the Jordan decomposition: for $g\in G$, $\lambda(g) \in \bar{\mathfrak a}^+$ is a unique element such that $\exp(\lambda(g))$ is conjugate to the hyperbolic component $h$ of the Jordan decomposition $g=ehu$.


\subsection{Morse quasigeodesic}\label{Morse}
Let $G$ be a semisimple Lie group without compact factors and $K$ be a maximal compact subgroup of $G$. Let $X$ be the associated symmetric space of noncompact type.  
Let $\partial_\infty X$ denote the visual boundary of $X$ {\kim i.e. the set of all asymptotic classes of geodesic rays of $X$.}
{\kim The symmetric space $X$ associated to $G$ is a Hadamard manifold with the symmetric Riemannian metric $d_X$ and hence the visual boundary $\partial_\infty X$ is homeomorphic to the sphere of $\dim(X)-1$. There is a natural topology on $X\cup\partial_\infty X$, namely the \emph{cone topology}. 
The cone topology on  $X\cup\partial_\infty X$ is generated by the open sets in $X$ and open cones which are defined by 
\[C_x(\xi,\epsilon)=\{ y\in X\cup \partial_\infty X \ | \ y\neq x \text{ and }\angle_x(\xi, y)<\epsilon \} \]
for $x\in X$ and $\xi\in \partial_\infty X$. For more details, we refer the reader to \cite[Section 3]{BGS}.
}

Let $W$ be the Weyl group acting on a model maximal flat $F_{mod} \cong \mathbb R^\mathrm{rank(G)}$ of $X$ and on the model apartment $a_{mod}=\partial_\infty F_{mod}\cong S^{\mathrm{rank}(G)-1}$ where $\mathrm{rank}(G)$ denotes the real rank of $G$.
The pair $(a_{mod},W)$ is the spherical Coxeter complex associated with $X$.
Then the spherical model Weyl chamber is defined as the quotient $\sigma_{mod}=a_{mod}/W$.  The natural projection $\theta : \partial_\infty X \rightarrow \sigma_{mod}$ restricts to an isometry on every chamber $\sigma \subset \partial_\infty X$.
For a chamber $\sigma \in \partial_\infty X$ and a point $x \in X$, the \emph{Weyl sector} $V(x,\sigma)$ is defined as the union of rays emanating from $x$ and asymptotic to $\sigma$. The Euclidean model Weyl chamber $\Delta$ is defined as the cone over $\sigma_{mod}$ with tip at the origin.

For two points $x,y\in X$, the $\Delta$-valued distance $d_\Delta(x,y)$ is defined as follows: Choose a maximal flat $F$ containing $x$ and $y$. Identifying $F$ isometrically with $F_{mod}$, regard $x$ and $y$ as points in $F_{mod}$. Then $d_{\Delta}(x,y)$ is defined by $$d_\Delta(x,y)=proj(y-x) \in \Delta$$ where $proj : F_{mod} \ra F_{mod}/W \cong \Delta$ is the quotient map. Note that in general, $d_\Delta$ is not symmetric. The resulting $\Delta$-valued distance $d_\Delta(x,y)$ does not depend on the choices of $F$. 
Let $\partial_{Tits}X$ be the Tits boundary of $X$. For a simplex $\tau\subset \partial_{Tits} X$, st($\tau$) is the smallest subcomplex of $\partial_{Tits} X$ containing all chambers $\sigma$ such that $\tau\subset\sigma$. The open star ost($\tau$) is the union of all open simplices whose closures intersect int($\tau$) nontrivially. For a face $\tau_{mod}$ of the model Weyl chamber $\sigma_{mod}$, ost($\tau_{mod}$) denotes its open star in $\sigma_{mod}$.  
A point $\xi\in \partial_{Tits} X$ is called \emph{$\tau_{mod}$-regular} if $\theta(\xi)\in \text{ost}(\tau_{mod})$ where $\theta$ is the type map defined as the canonical projection map $$\theta:\partial_\infty X\rightarrow \partial_\infty X/ G=\sigma_{mod}.$$ If  $\theta(\xi)\in\Theta$ for a compact set $\Theta\subset \text{ost}(\tau_{mod})$, then $\xi\in\partial_\infty X$ is said to be \emph{$\Theta$-regular}. A geodesic segment $xy$ is called \emph{$\Theta$-regular} (resp. \emph{$\tau_{mod}$-regular}) if it is contained in a geodesic ray $x\xi$ with $\xi$ $\Theta$-regular (resp. $\tau_{mod}$-regular). Note that if $\Theta$ is $\iota$-invariant, $xy$ is $\Theta$-regular if and only if $yx$ is $\Theta$-regular.  Here $\iota$ is $-w_0$ where $w_0$ is the longest element of the Weyl group. A $\Theta$-star of a simplex $\tau$ of type $\tau_{mod}$ is $\text{st}_\Theta (\tau)=\text{st}(\tau)\cap \theta^{-1}(\Theta)$. Then the $\Theta$-cone $V(x,\text{st}_\Theta(\tau))$, which is a union of geodesic rays starting at $x$ and asymptotic to $\text{st}_\Theta (\tau)$, is convex in $X$. For more details, see \cite[Section 4.2]{KLP2}.

\begin{definition}[Regular sequence, \cite{KLP2}]\label{def:primitive}
{\kim A sequence $x_n \to \infty$ in $X$ is $\tau_{mod}$-{\sl regular} if for some (hence any) $x\in X$,
$$d(d_\Delta(x, x_n), V(0,\sigma_{mod}-\mathrm{ost}(\tau_{mod})))\rightarrow +\infty \text{ as }n\ra \infty.$$ 
A sequence $g_n\rightarrow \infty$ in $G$ is $\tau_{mod}$-{\sl regular} if some (hence any) orbit $(g_nx)$ is $\tau_{mod}$-regular.}
\end{definition}
In particular, when $\tau_{mod}=\sigma_{mod}$, obviously $V(0,\sigma_{mod}-\mathrm{int}(\sigma_{mod}))=\partial \Delta$.


\begin{definition}[Morse quasigeodesic, \cite{KLP2}] A continuous map $p:I\rightarrow X$ is called an \emph{$(L,A, \Theta, D)$-Morse quasigeodesic} if it is an $(L,A)$-quasigeodesic and for all $t_1,t_2\in I$, the subpath $p|_{[t_1,t_2]}$ is $D$-close to a $\Theta$-diamond $\Diamond_\Theta(x_1,x_2)$ with $d_X(x_i, p(t_i))\leq D$ where $\Theta$ is an $\iota$-invariant $\tau_{mod}$-convex compact set and $\iota(\tau_{mod})=\tau_{mod}$. Here a \emph{$\Theta$-diamond} of a $\Theta$-regular segment $xy$ is
$$\Diamond_\Theta(x,y)=V(x,\text{st}_\Theta(\tau_+))\cap V(y,\text{st}_\Theta(\tau_-))$$ where $\tau_+$ (resp. $\tau_-$) is a unique simplex  of
type $\tau_{mod}$ such that the geodesic ray $x\xi$ (resp. $y\xi$) containing $xy$ with $\xi\in$ st($\tau_+)$ (resp. $\xi\in$ st($\tau_-)$). A continuous map $p:I\rightarrow X$ is called an \emph{$(L, A, \Theta, D, S)$-local Morse quasigeodesic} in $X$ if for all $t\in I$, the subpath $p|_{[t, t+S]}$ is an $(L,A,\Theta, D)$-Morse quasigeodesic. 
\end{definition}

In \cite[Theorem 7.18]{KLP}, Kapovich, Leeb and Porti show that for $L,A,\Theta,\Theta'$, $D$ with $\Theta \subset \text{int}(\Theta')$, there exist $S,L',A',D'$ such that every $(L,A,\Theta,D,S)$-local Morse quasigeodesic in $X$ is an $(L',A',\Theta',D')$-Morse quasigeodesic.

\subsection{Limit set}
We stick to the notations of Section \ref{Morse}. 
Let $\Gamma$ be a nonelementary discrete subgroup of $G$. 
The \emph{geometric limit set} $\Lambda_\Gamma$ of $\Gamma$ is defined by $$\Lambda_\Gamma=\overline{\Gamma \cdot x}\cap \partial_\infty X.$$
An isometry $g\in G$ is said to be an \emph{axial isometry} if the displacement function $d_g : X \rightarrow \mathbb R$ defined by $$d_g(x)=d_X(x,gx)$$ has a positive minimum value in $X$.
The limit $g^+=\lim_{n\rightarrow \infty}g^n\cdot x$ is called the \emph{attractive fixed point} of $g$. For more details, we refer to \cite[Section 1.9]{Eb} and \cite{Link}.
It is well known that the set of attractive fixed points of axial isometries in $\Gamma$ is dense in $\Lambda_\Gamma$ (See \cite{Be,Link}).
{\kim
\begin{definition}[Regular subgroup,  \cite{KLP2}]
A subgroup $\Gamma\subset G$ is \emph{$\tau_{mod}$-regular} if all sequences $\gamma_n\to \infty$ in $\Gamma$ are $\tau_{mod}$-regular. Furthermore, $\Gamma$ is said to be \emph{uniformly $\tau_{mod}$-regular} if every limit point of $\Gamma$ is $\tau_{mod}$-regular. We say that a representation of a group into $G$ is \emph{(uniformly) $\tau_{mod}$-regular} if its image group in $G$ is (uniformly) $\tau_{mod}$-regular.
\end{definition}

\begin{remark}\label{rem:regiff}
Let $K$ be the stabilizer of $x\in X$ in $G$ and $\mu:G\to \bar{\mathfrak a}^+$ the Cartan projection with respect to the Cartan decomposition $K(\exp \bar{\mathfrak a}^+)K$. Then the Euclidean model of Weyl chamber $\Delta$ is identified with $\bar{\mathfrak{a}}^+$ and $d_\Delta(x,gx)=\mu(g)$, and $d(d_\Delta(x, gx), V(0,\sigma_{mod}-\mathrm{int}(\sigma_{mod})))$ is the distance of $\mu(g)$ from the boundary $\partial \bar{\mathfrak{a}}^+$ of $\bar{\mathfrak{a}}^+$.
Thus the $\sigma_{mod}$-regularity of a sequence $(g_n)$ in $G$ is equivalent to the condition that the distance of $\mu(g_n)$ from $\partial \bar{\mathfrak{a}}^+$ converges to infinity.
For more details on $d_\Delta$, see \cite{KLM}.
\end{remark}
}

\subsection{Positive representations}\label{2.2}
{\kim Let $B^+$ be the set of upper triangular matrices and $B^-$ be the set of lower triangular matrices in $\pgl$. These two sets $B^+$ and $B^-$ are opposite Borel subgroups of $\pgl$ i.e., their intersection $B^+\cap B^-$ is a maximal torus of $\pgl$. 
A \emph{full flag} in $\R^m$ is a family $F$ of nested linear subspaces $$\{0\}=F^{(0)}\subset F^{(1)}\subset \cdots \subset F^{(n-1)}\subset F^{(m)}=\R^m$$ where each $F^{(i)}$ has dimension $i$.
The set of all full flags in $\R^m$ is called a \emph{full flag variety} of $\R^m$, denoted by $\mathcal F(\R^m)$. It has been well known that the full flag variety of $\R^m$ is parametrized by $\pgl/B^+$. We may therefore think of $\pgl/B^+$ as the set of all full flags.

A real matrix is called \emph{totally positive} if all its minors are positive. An upper triangle matrix is called \emph{totally positive} if all its minors which are not identically zero are positive.
Denote the set of unipotent matrices in $B^+$ by $U^+$ and the set of totally positive elements of $U^+$ by $U^+(\mathbb R_{>0})$. 
The theory of totally positive matrices was developed in 1930's and it is generalized to arbitrary semisimple real Lie groups by Lusztig \cite{Lu}.

 A configuration of full flags $(F_1,\ldots,F_n)$ is \emph{positive} if under the action of $\pgl$, it is equivalent to
$$(B^+,B^-, u_1\cdot B^-, (u_1u_2)\cdot B^-,\ldots, (u_1\cdots u_{n-2})\cdot B^- )$$ where $u_i\in U^+(\mathbb R_{>0})$ for all $i$.}
The positive configurations of full flags in $\R^m$ have a simple geometric description. A curve in $\mathbb R\mathbb P^{m-1}$ is \emph{convex} if any hyperplane intersects it in no more than $n$ points.
Fock and Goncharov prove that a configuration of $n$ real flags $(F_1,\ldots,F_n)$ in $\mathbb R\mathbb P^{m-1}$ is positive if and only if there exists a smooth convex curve $\xi$ in $\mathbb R\mathbb P^{m-1}$ such that
the flag $F_i$ is an osculating flag at a  point $x_i\in \xi$ and the order of the points $x_1,\ldots,x_n$ is compatible with an orientation of $\xi$. For more details, we refer the reader to \cite[Theorem 1.3]{FG}.

Let $\Sigma$ be a compact, connected, orientable surface (possibly with boundary) of negative Euler characteristic.
A finite volume hyperbolic metric on $\Sigma$ is \emph{admissible} if the completion of $\Sigma$ is a surface with totally geodesic boundary components and cusps.
The boundary at infinity $\partial_\infty \pi_1(\Sigma)_l$ of $\Sigma$ with $l$ cusps is the boundary at infinity of the universal cover $\widetilde \Sigma$ of $\Sigma$ equipped with some admissible hyperbolic metric on $\Sigma$. Then the set  $\partial_\infty \pi_1(\Sigma)_l$ has a cyclic ordering on points depending on the orientation of $\Sigma$.
For the representation $\rho$ of an admissible hyperbolic metric on $\Sigma$, the set $\partial_\infty \pi_1(\Sigma)_l$ is identified with the limit set of $\rho(\pi_1(\Sigma))$.
For more details, see \cite[Section 2]{LM}.

A continuous map $$\xi:\partial_\infty\pi_1(\Sigma)_l \rightarrow \mathcal F(\R^m)$$ is \emph{positive} if for any positively orientable $n$-tuple in $\partial_\infty\pi_1(\Sigma)_l$, $(x_1,\ldots,x_n)$, its image $(\xi(x_1),\ldots,\xi(x_n))$  is a positive configuration of flags. A representation $\rho:\pi_1(\Sigma)\rightarrow \pgl$ is said to be \emph{positive} if there exists a positive $\rho$-equivariant continuous map $\xi:\partial_\infty\pi_1(\Sigma)_l \rightarrow  \mathcal F(\R^m)$ for some $l$. {\kim Fock and Goncharov proved that the following properties hold for positive representations $\rho:\pi_1(\Sigma) \rightarrow \pgl$. See Theorem 1.9 and 1.10 in \cite{FG}.}
\begin{enumerate}
\item $\rho$ is discrete and faithful.
\smallskip

\item $\rho(\gamma)$ is positive hyperbolic for any non-peripheral loop $\gamma$, i.e., {\kim conjugate to a diagonal matrix with all positive eigenvalues.}
    \end{enumerate}
By (2), every $\rho(\gamma)$ has attracting and repelling fixed points in $\mathcal F(\R^m)$.

\subsection{Frenet curves}\label{sec:frenet}
{\kim A continuous curve $\xi :S^1\to \mathcal F(\R^m)$ is called a \emph{Frenet curve} if the following conditions hold.}
\begin{itemize}
\item For every pairwise distinct points $(x_1,\ldots,x_k)$ in $S^1$ and positive integers $(n_1,\ldots,n_k)$ such that $$\sum_{i=1}^k n_i \leq n,$$  the sum $$\xi^{(n_1)}(x_1)+\cdots+\xi^{(n_k)}(x_k)$$ is direct.
\item For every $x$ in $S^1$ and positive integers $(n_1,\ldots,n_k)$ such that $$l=\sum_{i=1}^k n_i \leq m,$$ we have \begin{eqnarray}\label{hyperconvex} \lim_{\substack{(y_1,\ldots,y_k) \rightarrow x, \\ y_i \ \text{all distinct}}} \left( \bigoplus_{i=1}^k \xi^{(n_i)}(y_i) \right)=\xi^{(l)}(x).\end{eqnarray}
\end{itemize}

{\kim There is a relation between Frenet curves and positive representations. For every positive representation $\rho:\pi_1(\Sigma)\to \pgl$, the positive $\rho$-equivariant continuous map associated to $\rho$ is the restriction of a Frenet curve. For more details, we refer the reader to \cite[Section 1.10--1.11 and Section 7.6--7.9]{FG} and \cite{LM}.}

\section{Regularities of convex projective structures}\label{convex}

In this section, we will prove the $\sigma_{mod}$-regularities of convex projective structures, which is a key part in proving that every convex projective structure is $\sigma_{mod}$-primitive stable.
We here introduce a good method to deal with this issue by just looking at the shape of the boundaries of convex projective domains.

Let $\Sigma$ be a compact, connected, orientable surface (possibly with boundary) of negative Euler characteristic. A convex projective structure on $\Sigma$ is a representation of $\Sigma$ as a quotient $\Omega / \Gamma$ where $\Omega$ is a convex domain in $\mathbb R\mathbb P ^2$ and $\Gamma$ is a discrete subgroup of $\mathrm{PGL}(3,\mathbb R)$ acting properly and freely on $\Omega$. 
If $\Omega$ is not a triangle, the elements of $\rsl(3,\mathbb R)$ acting on  a properly convex domain are classified as follows (see \cite{Marquis}):




\smallskip

\begin{enumerate}
\item {\bf Hyperbolic :} the matrix is conjuagate to $$\begin{bmatrix} 
           \lambda^+ & 0 & 0 \\
            0              & \lambda^0 & 0\\
            0           & 0 & \lambda^-\end{bmatrix}, \begin{array}{l}  \text{where } \lambda^+>\lambda^0>\lambda^->0 \\ \text{and } \lambda^+\lambda^0\lambda^-=1.\end{array}$$
\item {\bf Quasi-hyperbolic :} the matrix is conjugate to $$\begin{bmatrix} 
           \alpha & 1 & 0 \\
            0              & \alpha & 0\\
            0           & 0 & \beta \end{bmatrix},  \begin{array}{l}  \text{where } \alpha, \beta>0, \alpha^2 \beta=1 \\ \text{and }\alpha, \beta \neq 1.\end{array}$$
\item {\bf Parabolic :}  the matrix is conjugate to  $$\begin{bmatrix} 
           1 & 1 & 0 \\
            0              & 1 & 1\\
            0           & 0 & 1\end{bmatrix}.$$
\item {\bf Elliptic :}  the matrix is conjugate to $$\begin{bmatrix} 
           1 & 0 & 0 \\
            0              & \cos\theta & -\sin\theta \\
            0           & \sin\theta & \cos\theta\end{bmatrix},\ \text{where } 0<\theta<2\pi.$$
\end{enumerate}

\smallskip

It is said that a convex projective structure on $\Sigma$ has \emph{geodesic boundary} if the holonomy of each boundary component is hyperbolic. 
Goldman \cite{Gol} showed that the space $\mathcal T(\Sigma)$ of convex real projective structures on $\Sigma$ with geodesic boundary is of dimension $-8\chi(\Sigma)$. 
Indeed, the holonomy representations of convex projective structures with geodesic boundary are all $\sigma_{mod}$-Anosov representations. Hence they are $\sigma_{mod}$-uniformly regular \cite{KLP2}.
The natural question arises whether the other convex projective structures are $\sigma_{mod}$-uniformly regular. In order to answer the question, we consider two cases, whether the holonomy representation contains any quasi-hyperbolic element or not.

\subsection{Strictly convex case}
Suppose that $\Omega$ is a strictly convex domain in $\mathbb{RP}^m$ with $C^1$ boundary equipped with a Hilbert metric. There is a notion of parallel transport $T^t$ due to Foulon \cite{Fu1, Fu2}. 
Let $H\Omega=(T\Omega\setminus \{0\})/\mathbb R^*_+$.
The parallel Lyapunov exponent of $v\in T_x\Omega$ along $\phi^t(x,[\psi])$ is defined to be $$\eta((x,[\psi]),v)=\lim_{t\ra\infty}\frac{1}{t}\ln F(T^t(v))$$ 
where $\phi^t$ is the geodesic flow and $(x,[\psi])\in H\Omega$.

Let $\phi^t$ be a $C^1$ flow on a Riemannian manifold $W$. A point $w\in W$ is said to be \emph{regular} if there exists
a $\phi^t$-invariant decomposition
$$TW=E_1\oplus \cdots \oplus E_p$$ along $\phi^t w$ and real numbers
$$\chi_1(w)<\cdots< \chi_p(w),$$ such that, for any vector $Z_i\in E_i\setminus \{0\}$,
$$\lim_{t\ra\pm \infty}\frac{1}{t}\log ||d\phi^t(Z_i)||=\chi_i(w),$$ and
\begin{eqnarray}\label{determ}
\lim_{t\ra\pm \infty}\frac{1}{t}\log |\text{det} d\phi^t|=\sum_{i=1}^p\text{dim}E_i \cdot \chi_i(w).
\end{eqnarray}
The numbers $\chi_i(w)$ associated with a regular point $w$ are called the Lyapunov exponents of the flow at $w$.
Crampon \cite{Cr} showed that a point $w=(x,[\psi])\in H\Omega$ is regular if and only if there exists a decomposition
$$T_x\Omega=\R \psi\oplus E_0(w)\oplus (\oplus_{i=1}^n E_i(w))\oplus E_{n+1}(w),$$ and real numbers
$$-1=\eta_0(w)<\eta_1(w)<\cdots <\eta_n(w)< \eta_{n+1} = 1,$$
such that
for any $v_i\in E_i(w)\setminus \{0\}$,
$$\lim_{t\ra\pm\infty}\frac{1}{t}\ln F(T^t_w(v_i))=\eta_i(w),$$ and
$$\lim_{t\ra\pm\infty}\frac{1}{t}\ln |\text{det} T^t_w|=\sum_{i=0}^{n+1} \text{dim} E_i(w) \eta_i(w).$$

The Lyapunov exponents have to do with the convexity of the boundary of $\Omega$. For 2-dimensional $\Omega$, the boundary $\partial \Omega$ can be written as the graph of a convex function $f$ around $p\in\partial\Omega$ with $p=0, f(0)=0$. Such a function $f$ is said to be {\it approximately $\alpha$-regular} for an $\alpha\in [1,\infty]$, if
$$\lim_{t\ra 0} \frac{\ln \frac{f(t)+f(-t)}{2}}{\ln |t|}=\alpha.$$ This quantity is invariant under affine and projective
transformations. Approximately $\alpha$-regularity means that the function behaves like $|t|^\alpha$ near the origin. The case of $\alpha=\infty$ means that the boundary point belongs to a flat segment.

Let $p\in\partial\Omega$. Let $w=(x,[\psi])\in H\Omega=T\Omega\setminus \{0\}/\R_+^*$ be  regular  such that  $\psi(\infty)=p$. Let $\cal H_w$ be a horocycle based at $p$ and passing through $x$.
It is shown in \cite[Theorem 4.2]{Cr} that for any $v(w)\in T_x\cal H_w$,
\begin{eqnarray}\label{boundary}
\eta(w,v(w))=\frac{2}{\alpha(p)}-1.
\end{eqnarray}

Suppose $\gamma$ is a hyperbolic isometry whose eigenvalues are $\lambda_1>\lambda_2>\lambda_3$. Then it is shown in \cite[Section 3.6]{Crampon} that
$$\eta(w, v(w))=-1+2 \frac{\ln\frac{\lambda_1}{\lambda_2}}{\ln\frac{\lambda_1}{\lambda_3}}, $$ hence 
\begin{eqnarray}\label{alpha}
\alpha(\gamma^+)^{-1}=\frac{\ln\frac{\lambda_1}{\lambda_2}}{\ln\frac{\lambda_1}{\lambda_3}}.\end{eqnarray}  Here $w=(x,[\phi]),\ \phi(\infty)=\gamma^+$ and $v(w)\in T_x\cal H_w$.

\begin{proposition}\label{uniform-regular}
Let $\Sigma$ be a compact, connected, orientable surface possibly with boundary. 
Let $\rho : \pi_1(\Sigma) \ra \rsl(3,\mathbb R)$ be the holonomy of a strictly convex projective structure on the interior of $\Sigma$. Then $\rho$ is uniformly $\sigma_{mod}$-regular.
\end{proposition}

\begin{proof}
{\kim 
Before giving a proof, we first remark that there is a strictly convex domain $\Omega$ with $C^1$-boundary which is invariant under $\rho(\pi_1(\Sigma))$ by the following reason. The strict convexity of $\rho$ implies that the holonomy of each boundary component of $\Sigma$ is either hyperbolic or parabolic. Then, by doubling the convex projective surface associated to $\rho$, we obtain a properly convex projective surface $S$ of finite volume.
Then due to the works of Benoist \cite{Be2} and Marquis \cite{Marquis}, the properly convex domain $\Omega$ associated to $S$ is strictly convex and moreover $\partial \Omega$ is $C^1$.
By the doubling construction (see for instance \cite[Section 3.10]{Gol} or \cite[Section 9.2.2]{LM}), $\rho(\pi_1(\Sigma))$ clearly preserves $\Omega$. Therefore $\Omega$ is the desired domain.


}

We claim that $\alpha$ is bounded on $\partial \Omega$. 
The proof of the following lemma was indicated by Yi Huang. The authors thank him for pointing out references for the proof.
\begin{lemma}\label{lem:3.2} If a strictly convex $\Omega$ with $C^1$ boundary admits a quotient which is a strictly convex real projective surface possibly with cusps, then
$$\sup_{p\in\partial\Omega}\alpha(p)<\infty.$$
\end{lemma}
\begin{proof}
Benoist-Hulin  showed that the Blaschke metric is negatively curved (Proposition 3.3 in \cite{BH2})
and approaches a negative constant deep into a cusp (Proposition 3.1 in \cite{BH}), hence the curvature is pinched negative on the surface. Since the Hilbert metric and the Blaschke metric are comparable \cite[Corollary 4.7]{BH2}, $\Omega$ equipped with the Hilbert metric is Gromov hyperbolic. Furthermore Benoist \cite{BIHES} showed that $\Omega$ is quasisymmetrically convex, hence $\partial \Omega$ is $\beta$-convex for some $\beta\in [2,\infty)$ (Corollary 1.5 in \cite{BIHES}).
Benoist's notion of $\beta$-convexity is as follows. Let $f$ be a $C^1$-convex function. Denote $D_z(h)=f(z+h)-f(z)-f'(z)h$. Then $f$ is $\beta$-convex if $$\inf_{\{(z,h):h\neq 0\}} |h|^{-\beta}D_z(h)>0$$ and quasisymmetrically convex if there exists $H\geq 1$ such that $D_z(h)\leq H D_z(-h)$.

In our case, since $\partial \Omega$ is $C^1$-convex, for each $p\in \partial \Omega$, $\partial\Omega$ near $p$ can be represented by the graph of a quasisymmetrically convex $C^1$ function $f$ with $f(p)=0$.

Suppose  $\partial\Omega$ is approximately $\alpha$-regular at $p$.  Then it is easy to see that for any $\epsilon>0$, and small $|t|$ (\cite[Lemma 4.1]{Cr}),
$$|t|^{\alpha+\epsilon}\leq \frac{f(p+t)+f(p-t)}{2} \leq |t|^{\alpha-\epsilon}.$$
Hence 
$$ f(p+t)+f(p-t)= D_p(t)+ D_p(-t)\leq 2|t|^{\alpha-\epsilon}.$$ By the $\beta$-convexity of $\partial\Omega$, 
$D_p(-t)> c |t|^\beta$ for small $t$ and for some fixed constant $c>0$. Consequently, for any small $\epsilon>0$ and small $|t|$
$$2c|t|^{\beta}\leq D_p(t)+ D_p(-t) \leq 2|t|^{\alpha-\epsilon} .$$ 
This is possible only when $\alpha\leq \beta$. Hence the claim follows.
\end{proof}

Let $\gamma$ be a hyperbolic element with eigenvalues $\lambda_1>\lambda_2>\lambda_3>0$ and, the attracting fixed point $\gamma^+\in \partial\Omega$ and the repelling fixed point $\gamma^-\in \partial \Omega$. By (\ref{alpha}) and Lemma \ref{lem:3.2},  
\begin{equation}\label{eqn:l1}
\alpha(\gamma^+)=\frac{\ln\frac{\lambda_1}{\lambda_3}}{\ln\frac{\lambda_1}{\lambda_2}}\leq \beta.
\end{equation}
Considering $\gamma^{-1}$, we also obtain 
\begin{equation}\label{eqn:l2}
\alpha(\gamma^-)=\frac{\ln\frac{\lambda_1}{\lambda_3}}{\ln\frac{\lambda_2}{\lambda_3}}\leq \beta.
\end{equation}
Putting that $a_1=\ln \lambda_1 - \ln \lambda_2$ and $a_2=\ln \lambda_2 - \ln \lambda_3$, the positive Weyl chamber $\bar{\mathfrak{a}}^+$ is identified with $\{(a_1,a_2) \in \mathbb R^2 \ | \ a_1\geq 0 \text{ and } a_2\geq 0\}$. Then it holds that $$\frac{a_1+a_2}{a_1} \leq \beta \text{ and } \frac{a_1+a_2}{a_2} \leq \beta.$$
From these inequalities, we have that $$2\beta\geq  2+ \frac{a_1}{a_2}+\frac{a_2}{a_1} \geq 4 \text{ and thus } \beta\geq 2.$$
Furthermore, $$(\beta-1)^{-1} a_1 \leq a_2 \leq  (\beta-1)a_1.$$
The above inequalities imply that the set of attractive fixed points of axial isometries in $\Gamma$ is contained in a compact subset $\Theta$ of $\mathrm{int}(\sigma_{mod})$.
{\kim By the work of Benoist \cite{Benoist} (see also \cite{CG,Link})}, the geometric limit set of $\Gamma$ is the closure of the set of attractive fixed points of regular axial isometries in $\Gamma$. Therefore, $\rho$ is uniformly $\sigma_{mod}$-regular.
\end{proof}

{\kim The \emph{Hilbert length} $\ell(g)$ of a hyperbolic element $g\in \mathrm{PGL}(3,\R)$ is defined by $$\ell(g)=\ell_1(g)+\ell_2(g)=\ln \lambda_1 -\ln \lambda_3$$
where $\lambda_1>\lambda_2>\lambda_3>0$ are the eigenvalues of $g$ and $\ell_i(g)=\ln \lambda_i -\ln \lambda_{i+1}$ for $i=1,2$.
By (\ref{eqn:l1}) and (\ref{eqn:l2}), the following corollary is immediate.
}
\begin{corollary}
{\kim Let $\Sigma$ be a compact, connected, orientable surface possibly with boundary. 
Let $\rho : \pi_1(\Sigma) \ra \rsl(3,\mathbb R)$ be the holonomy of a strictly convex projective structure on the interior of $\Sigma$. Then there exists  $\beta>0$ such that for every  hyperbolic element $\rho(\gamma)$, $\gamma\in \pi_1(\Sigma)$,
$$ \ell(\rho(\gamma))\leq \beta \ell_i(\rho(\gamma))$$ for $i=1,2$. }
\end{corollary}
This is also proved in \cite[Theorem 1.23]{HS}.

\subsection{Properly convex case}
When $\Omega$ is properly convex but not strictly convex, the holonomy of one of boundary components is quasi-hyperbolic.
Recall that a quasi-hyperbolic element is conjugate to 
$$\begin{bmatrix} 
           \alpha & 1 & 0 \\
            0              & \alpha & 0\\
            0           & 0 & \beta \end{bmatrix},  \begin{array}{l}  \text{where } \alpha, \beta>0, \alpha^2 \beta=1 \\ \text{and }\alpha, \beta \neq 1.\end{array}$$

The axis of a quasi-hyperbolic isometry, which is a segment connecting the eigenvectors $p^-, p^+$ corresponding to $\alpha,\beta$ eigenvalues, can lie on $\partial \Omega$. In such a case, $\Omega$ is not strictly convex. The line corresponding to the eigenvalue $\alpha$ is outside $\Omega$.
This line is one of two tangent lines of $\Omega$ at $p^+$. The tangent line of $\Omega$ at $p^-$ contains the axis of the quasi-hyperbolic element. For more details, see \cite{Marquis}.  By this reason, the limit curve from $\partial_\infty \pi_1(\Sigma)$ to the flag variety does not satisfy the positivity and antipodality at two points $p^\pm$. Hence the holonomy representation is neither positive nor Anosov.

To deal with the properly convex case, we need a different approach as the strictly convex case.
Throughout this section, we denote $\mathrm{PGL}(3,\mathbb R)$ by $G$ for simplicity.
Embed $G$ into $\mathbb{P}(\mathrm{End} (\mathbb R^3))$ and let $\bar G$ be the closure of  $G$ in $\mathbb{P}(\mathrm{End} (\mathbb R^3))$. Then $\bar G$ is a compactification of $G$. Note that the rank of any matrix on the boundary $\partial G$ of $\bar G$ is either $1$ or $2$. We say that a sequence $(g_n)$ of $G$ is a \emph{rank $1$ sequence} if $(g_n)$ converges to a matrix of rank $1$ in $\bar G$. Similarly a sequence in $G$ is said to be a \emph{rank $2$ sequence} if it converges to a matrix of rank $2$ in $\bar G$.

\begin{lemma}\label{lem:rank2}
A discrete subgroup $\Gamma$ of $\mathrm{PGL}(3,\mathbb R)$ is $\sigma_{mod}$-regular if and only if there are no rank $2$ sequences in $\Gamma$.
\end{lemma}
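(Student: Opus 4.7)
The plan is to translate both sides of the equivalence into statements about the singular values of elements of $\Gamma$ and then match them up directly. First, I would fix a lift of each $g \in \mathrm{PGL}(3,\mathbb R)$ to $\mathrm{SL}^\pm(3,\mathbb R)$ and Cartan-decompose $g = k\, \mathrm{diag}(\sigma_1,\sigma_2,\sigma_3)\, k'$ with $\sigma_1 \geq \sigma_2 \geq \sigma_3 > 0$ and $\sigma_1\sigma_2\sigma_3 = 1$, so that $\mu(g) = (\log\sigma_1, \log\sigma_2, \log\sigma_3)$. The two walls of $\sigma_{mod}$ are $\{t_1 = t_2\}$ and $\{t_2 = t_3\}$; since $\mathrm{ost}(\sigma_{mod})$ is the chamber interior, Definition \ref{def:primitive} makes $\sigma_{mod}$-regularity of a sequence $g_n \to \infty$ equivalent to the assertion that both gaps $\log\sigma_1^n - \log\sigma_2^n$ and $\log\sigma_2^n - \log\sigma_3^n$ diverge to $\infty$. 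Failure of regularity is then precisely the existence of a subsequence along which at least one of these gaps stays bounded.

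Next, I would identify the rank of a projective limit in $\bar G$. Writing $[g_n] = [g_n/\sigma_1^n]$ in $\mathbb P(\mathrm{End}(\mathbb R^3))$, the rescaled element factors as $k_n\, \mathrm{diag}(1, \sigma_2^n/\sigma_1^n, \sigma_3^n/\sigma_1^n)\, k_n'$. Compactness of $\mathrm{SO}(3)$ and boundedness of the diagonal entries in $[0,1]$ yield convergent subsequences with limits of the form $k\, \mathrm{diag}(1, a, b)\, k'$, $1 \geq a \geq b \geq 0$, whose rank equals the number of nonzero entries on the diagonal. Thus a rank $2$ limit corresponds exactly to $\sigma_2^n/\sigma_1^n \to a > 0$ together with $\sigma_3^n/\sigma_1^n \to 0$, while a rank $1$ limit corresponds to $\sigma_2^n/\sigma_1^n \to 0$.

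With these translations the forward direction is immediate: a rank $2$ sequence in $\Gamma$ forces $\log\sigma_1^n - \log\sigma_2^n$ to remain bounded along the convergent subsequence, so $\Gamma$ fails to be $\sigma_{mod}$-regular. For the converse I would start with a sequence $g_n \to \infty$ in $\Gamma$ witnessing failure of regularity, pass to a subsequence, and split into two cases according to which wall is approached. In case (a), $\log\sigma_1^n - \log\sigma_2^n$ stays bounded; refining to a further subsequence gives $\sigma_2^n/\sigma_1^n \to a > 0$, and since $g_n \to \infty$ forces $\sigma_1^n \to \infty$, the identity $\sigma_1^n \sigma_2^n \sigma_3^n = 1$ yields $\sigma_3^n/\sigma_1^n \to 0$, so $g_n$ converges in $\bar G$ to a rank $2$ element. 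In case (b), $\log\sigma_2^n - \log\sigma_3^n$ stays bounded; here I would pass to the inverse sequence $g_n^{-1} \in \Gamma$, whose singular values are $(1/\sigma_3^n, 1/\sigma_2^n, 1/\sigma_1^n)$ and whose top-to-middle gap is precisely $\log\sigma_2^n - \log\sigma_3^n$, reducing to case (a).

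The point that requires most care is the asymmetry between the two walls: the rank $2$ condition directly detects only proximity to $\{t_1 = t_2\}$, so proximity to $\{t_2 = t_3\}$ must be exposed by passing to the contragredient, which is legitimate only because $\Gamma$ is closed under inversion. Aside from this duality step, the argument is just a careful repackaging of the Cartan decomposition together with the continuity of singular values under the projective compactification $G \hookrightarrow \mathbb P(\mathrm{End}(\mathbb R^3))$; discreteness of $\Gamma$ plays no role beyond ensuring that sequences of distinct elements really do escape every compact subset of $G$.
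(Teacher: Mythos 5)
Your argument is correct and follows essentially the same route as the paper: Cartan-decompose, rescale by the top singular value, and read off the rank of the limit in $\mathbb P(\mathrm{End}(\mathbb R^3))$ from which singular value gaps stay bounded. In fact you are slightly more careful than the paper at the one delicate point, namely that failure of regularity at the wall $\{t_2=t_3\}$ is only detected after passing to the inverse sequence, a step the paper's converse leaves implicit.
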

\begin{proof}
Assume that $\Gamma$ is $\sigma_{mod}$-regular. As explained in Remark \ref{rem:regiff}, if we normalize $K$ to be the stabilizer group of $x$ and write the Cartan decomposition as $G=K\exp \overline{\mathfrak a}^+ K$, then $d_\Delta(x, \gamma x)$ can be identified with  the Cartan projection $\mu(\gamma)\in \bar{\mathfrak{a}}^+$ for $\gamma \in \Gamma$.  Given a sequence $(\gamma_n) \rightarrow \infty$ in $\Gamma$, the sequence $\mu(\gamma_n)$ of the Cartan projections of $\gamma_n$'s is $\sigma_{mod}$-regular. This means that if we write $\mu(\gamma_n)=\mathrm{Diag}(a_n, b_n, c_n)$ with $a_n \geq b_n \geq c_n$, then $$\lim_{n \rightarrow \infty}(a_n-b_n)=\lim_{n\rightarrow \infty} (b_n-c_n) =\infty$$ since the difference of the coordinates are distances from the walls of the Weyl chamber.
The Cartan decomposition of $\gamma_n$ is written as $\gamma_n=s_n \exp({\mu(\gamma_n)})t_n$ for some $s_n, t_n \in K$.
By passing to a subsequence we may assume that the sequences $(s_n)$ and $(t_n)$ converge to $s_\infty$ and $t_\infty$ in $K$ respectively.
Then $$\lim_{n\rightarrow \infty}e^{-a_n}s_\infty^{-1}\gamma_n t_\infty^{-1}$$ converges to a rank $1$ matrix. Thus there are no rank $2$ sequences in $\Gamma$.

Conversely assume that $\Gamma$ is not $\sigma_{mod}$-regular. Then there exists a sequence $(\gamma_n)$ in $\Gamma$ such that $(a_n-b_n)$ is uniformly bounded. By a similar argument as above, one can prove that $(e^{-a_n}\gamma_n)$ converges to a matrix of rank $2$. Therefore the converse direction is proved.
\end{proof}



We learned Lemma \ref{lem:rank2} from M. Kapovich during his visit to KIAS.
Before we prove the regularities of properly convex projective structures, we recall the following fact due to Benz\'ecri \cite{Ben} a half century ago.

\begin{theorem}[Benz\'ecri]\label{Benzecri}Let $\Omega$ be a properly convex domain in $\mathbb{RP}^{m-1}$. Suppose that a sequence $(g_n)$ in $Aut(\Omega)$ converges to a projective transformation $g_\infty$ in $\mathbb{P}(\mathrm{End} (\mathbb R^{m}))$. Then $g_\infty(\Omega)$ is a face $F$ on $\partial\Omega$ and the range of $g_\infty$ is the subspace generated by $F$. For any compact set $Z$ in the complement of the kernel of $g_\infty$, $g_n(Z)$ uniformly converges to $g_\infty(Z)$. Furthermore the kernel of $g_\infty$ has empty intersection with $\Omega$.
\end{theorem}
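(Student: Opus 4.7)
The plan is to normalize the sequence $(g_k)$ at the matrix level and then exploit the geometry of the Hilbert metric on $\Omega$. I would lift each $g_k$ to $\tilde g_k\in\mathrm{GL}(n+1,\mathbb{R})$ with $\|\tilde g_k\|=1$ in a fixed operator norm, and pass to a subsequence so that $\tilde g_k\to\tilde g_\infty$ with $\|\tilde g_\infty\|=1$, representing $g_\infty$. Setting $K=\ker\tilde g_\infty$ and $I=\mathrm{Im}\,\tilde g_\infty$, I would observe that the matrix convergence is uniform on bounded sets of representatives, so $g_k\to g_\infty$ pointwise on $\mathbb{P}(\mathbb{R}^{n+1})\setminus\mathbb{P}(K)$ with values in $\mathbb{P}(I)$. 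The theorem is trivial when $\tilde g_\infty$ has full rank (then $g_\infty\in\mathrm{Aut}(\Omega)$, $K$ is trivial and $\mathbb{P}(I)$ is everything), so the genuine content concerns the rank-dropped case, in which $\mathbb{P}(I)$ is a proper projective subspace and hence misses the open set $\Omega$.

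The heart of the argument is the disjointness $\mathbb{P}(K)\cap\Omega=\emptyset$. I would argue by contradiction: assume $p\in\mathbb{P}(K)\cap\Omega$ and pick any $q\in\Omega$ with lift outside $K$. Since $\mathrm{Aut}(\Omega)$ acts by isometries of the Hilbert metric $d_\Omega$, the identity $d_\Omega(g_k(p),g_k(q))=d_\Omega(p,q)$ holds for every $k$. On the other hand $g_k(q)\to g_\infty(q)\in\overline\Omega\cap\mathbb{P}(I)\subset\partial\Omega$, and because $d_\Omega$ blows up near $\partial\Omega$, any sequence staying at bounded Hilbert distance from $g_k(q)$ is forced to accumulate at the same boundary point $g_\infty(q)$. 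Choosing two test points $q_1,q_2\in\Omega\setminus\mathbb{P}(K)$ with $g_\infty(q_1)\ne g_\infty(q_2)$, which is possible whenever $\mathrm{rk}\,\tilde g_\infty\ge 2$, forces $g_k(p)$ to approach two distinct boundary points at once, a contradiction. The residual rank-one case requires a separate treatment: $\mathbb{P}(K)$ is then a hyperplane and $g_\infty$ collapses everything off $K$ to a single image point $[w]$, and I would analyze the first-order departure $\tilde g_k-\tilde g_\infty$ to show that $\mathbb{P}(K)$ is a support hyperplane of $\overline\Omega$ at $[w]$, so it cannot cross the open set $\Omega$.

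Once disjointness is established, the remaining assertions follow quickly. Pointwise convergence together with $g_k(\Omega)=\Omega$ gives $g_\infty(\Omega)\subset\overline\Omega\cap\mathbb{P}(I)$, and the intersection $\overline\Omega\cap\mathbb{P}(I)$ is a convex subset of $\overline\Omega$ cut out by a proper projective subspace, hence contained in $\partial\Omega$ and therefore inside a single face $F$; by the relative openness of $g_\infty(\Omega)$ in $\mathbb{P}(I)$ together with the projectivity of $g_\infty$, this image exhausts the relative interior of $F$, and $\mathbb{P}(I)$ is precisely the subspace spanned by $F$. For the uniform convergence, given a compact $Z\subset\mathbb{P}(\mathbb{R}^{n+1})\setminus\mathbb{P}(K)$ I would pick unit lifts $\tilde z$ of $z\in Z$ whose distance to $K$ is uniformly bounded below; then $\tilde g_k\tilde z\to\tilde g_\infty\tilde z$ uniformly by the matrix convergence, and this descends to uniform convergence of $g_k(z)$ to $g_\infty(z)$ in $\mathbb{RP}^n$. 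The main obstacle is clearly the kernel-disjointness step: purely algebraic limit information is insufficient, and one must import the properness of the $\mathrm{Aut}(\Omega)$-action through the Hilbert metric in order to translate the algebraic rank drop of $\tilde g_\infty$ into the geometric statement that $\mathbb{P}(K)$ stays out of $\Omega$.
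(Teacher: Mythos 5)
The paper does not actually prove this statement---it is quoted as a classical theorem of Benz\'ecri with the citation \cite{Ben}---so your proposal can only be judged on its own merits. Your skeleton (normalized lifts, pointwise convergence off $\mathbb{P}(K)$, uniform convergence on compacta off $\mathbb{P}(K)$, then the face structure once kernel-disjointness is known) is reasonable, and the uniform-convergence step is fine. But the step you yourself identify as the heart of the matter, $\mathbb{P}(K)\cap\Omega=\emptyset$, rests on a false lemma: you assert that a sequence staying at bounded Hilbert distance from $g_k(q)\to\xi\in\partial\Omega$ must accumulate at the same boundary point $\xi$. That is true for strictly convex domains but fails for general properly convex ones, which is precisely the regime where the theorem has content (the face $F$ can have positive dimension only when $\partial\Omega$ contains segments). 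Concretely, in the open triangle $\{[x_1:x_2:x_3]: x_i>0\}$ the points $x_k=[1:1:1/k]$ and $y_k=[2:1:1/k]$ remain at constant Hilbert distance $\frac{1}{2}\log 2$ yet converge to the distinct boundary points $[1:1:0]$ and $[2:1:0]$ of a common open edge. Bounded Hilbert distance only confines the limits to the same open face, which is consistent with---indeed predicted by---the conclusion you are trying to prove, so your two-test-point contradiction never materializes. Two subsidiary assertions are non sequiturs for the same reason: a proper projective subspace $\mathbb{P}(I)$ does \emph{not} automatically miss the open set $\Omega$, and $\overline\Omega\cap\mathbb{P}(I)$ being ``cut out by a proper projective subspace'' does not place it in $\partial\Omega$. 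The correct reason that $g_\infty(q)\in\partial\Omega$ is properness of the isometric $\mathrm{Aut}(\Omega)$-action on $(\Omega,d_\Omega)$, which drives orbits of a divergent sequence out of every compact subset of $\Omega$. Your rank-one case is, as you concede, only a sketch.

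The kernel step can be repaired cleanly by duality: choose $f\in\Omega^*$, a linear form positive on the cone over $\overline\Omega$, with $f\circ\tilde g_\infty\neq 0$; this is possible because $\Omega^*$ is open and the forms killed by $\tilde g_\infty^{\,*}$ form the proper subspace $(\mathrm{Im}\,\tilde g_\infty)^{\perp}$. Since each $[f\circ\tilde g_k]$ lies in $\Omega^*$, the limit $[f\circ\tilde g_\infty]$ lies in $\overline{\Omega^*}$, hence is nonnegative on the open cone over $\Omega$ and therefore strictly positive there; so its kernel misses $\Omega$. But that kernel contains $K$, giving $\mathbb{P}(K)\cap\Omega=\emptyset$ uniformly in the rank. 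Alternatively, for $\mathrm{rk}\,\tilde g_\infty\geq 2$ your two-point idea can be salvaged projectively rather than metrically: if $p\in\mathbb{P}(K)\cap\Omega$ and $q\in\Omega\setminus\mathbb{P}(K)$, the two endpoints $a,b\in\partial\Omega$ of the chord through $p$ and $q$ are not in $\mathbb{P}(K)$ and satisfy $g_\infty(a)=g_\infty(b)=g_\infty(q)$, so the image chords $g_k([a,b])$ collapse onto the single point $g_\infty(q)$ and carry $g_k(p)$ with them; applying this to two points $q_1,q_2$ with $g_\infty(q_1)\neq g_\infty(q_2)$ yields the contradiction, leaving only the rank-one case to be handled separately.
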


Now we give a proof for the $\sigma_{mod}$-regularities of properly convex projective structures.

\begin{proposition}\label{prop:regular} Let $\Sigma$ be a compact, connected, orientable surface with boundary and negative Euler characteristic. 
Every convex projective structure on the interior of $\Sigma$ is $\sigma_{mod}$-regular.\end{proposition}
\begin{proof}
Let $\rho:\pi_1(\Sigma)\ra \rsl(3,\mathbb R)$ be the holonomy representation of a convex projective structure on the interior of $\Sigma$ and $\Omega$ be an invariant convex domain. 
Let $(\gamma_n)$ be an infinite sequence in $\rho(\pi_1(\Sigma))$ which converges to $\gamma_\infty$ in $\mathbb{P}(\mathrm{End} (\mathbb R^3))$. 
Then due to Theorem \ref{Benzecri}, $\gamma_\infty(\Omega)$ is a face on $\partial \Omega$. If there are no $1$-dimensional faces on $\partial \Omega$,  $\gamma_\infty(\Omega)$ must be a point on $\partial \Omega$. It means that $\gamma_\infty$ is a matrix of rank $1$ and thus we are done by Lemma \ref{lem:rank2}. Suppose that $\partial \Omega$ has $1$-dimensional faces and $\gamma_\infty(\Omega)$ is a $1$-dimensional face $J$ on $\partial \Omega$.
Note that $J$ must equal to some conjugate image of the axis of $\rho(b)$ for some boundary component $b$ of $\Sigma$ and moreover $\rho(b)$ is either hyperbolic or quasi-hyperbolic. Let $\gamma$ be the (quasi)-hyperbolic element translating along $J$.

Choose a connected compact subset $C$ of $\Omega$ with nonempty interior. Since $C$ is a Zariski-dense subset of $\mathbb{RP}^2$, 
it suffices to show that $\gamma_n(C)$ converges to a point on $\partial\Omega$ due to Lemma \ref{lem:rank2} or Corollary \ref{cor6.3}.
According to Benz\'ecri's theorem, $C$ is contained in the complement of the kernel of $\gamma_\infty$ and thus $\gamma_n(C)$ uniformly converges to $\gamma_\infty(C)$. Clearly, $$\gamma_\infty(C) \subset \gamma_\infty(\Omega)=J.$$
We claim that $\gamma_\infty(C)$ cannot contain any interior point of $J$. If $\gamma_\infty(C)$ contains an interior point of $J$, there exists a point $p$ of $C$ such that $\gamma_\infty(p)$ is an interior point of $J$
and $\gamma_n(p)$ converges to $\gamma_\infty(p)$.
However this is impossible since any orbit of $p$ under the action of $\rho(\pi_1(\Sigma))$ can never converge to any interior point of $J$ by considering the action of $\Gamma$ on $J$ by the following reason: Choose a fundamental domain $D$ of $\rho(\pi_1(\Sigma))$ in $\Omega$ whose one end is a segment of $J$ containing $\gamma_\infty(p)$, then $\gamma_n(p)$ can be contained in $D$ only once. Hence $\gamma_n(p)$ cannot converge to $\gamma_\infty(p)$. Therefore $\gamma_\infty(C)$ can only contain two endpoints of $F$.
Since $C$ is connected, $\gamma_\infty(C)$ must be only one endpoint of $J$, which completes the proof.
\end{proof}



Even though all convex projective structures are $\sigma_{mod}$-regular, it is still not clear whether they are uniformly $\sigma_{mod}$-regular or not. We will answer this question.
Recall that the limit cone $\mathcal L_\Gamma$ of a discrete subgroup $\Gamma$ of a semisimple Lie group is defined as the smallest
closed cone in $\bar{\mathfrak a}^+$ containing  the image of the Lyapunov projection $\lambda : \Gamma \ra \bar{\mathfrak a}^+$ which is induced by the Jordan decomposition.
Benoist \cite{Be} showed that if $\Gamma$ is Zariski-dense, its limit cone is convex and invariant under the opposite involution of $\bar{\mathfrak a}^+$. Moreover the geometric limit set of $\Gamma$ in any Weyl chamber at infinity, if nonempty, is naturally identified with the set of directions in $\mathcal L_\Gamma$. 

\begin{lemma}\label{quasihyperbolic}
Let $\rho:\pi_1(\Sigma)\ra \rsl(3,\mathbb R)$ be the holonomy of a convex projective structure on a surface $\Sigma$.
If $\rho(\pi_1(\Sigma))$ has quasi-hyperbolic element and is Zariski dense, then its limit cone is $\bar{\mathfrak{a}}^+$.
\end{lemma}
\begin{proof}
Recall that a quasi-hyperbolic element $g$ is conjugate to 
 $$\begin{bmatrix} 
           \alpha & 1 & 0 \\
            0              & \alpha & 0\\
            0           & 0 & \beta \end{bmatrix},  \begin{array}{l}  \text{where } \alpha, \beta>0, \alpha^2 \beta=1 \\ \text{and }\alpha, \beta \neq 1.\end{array}$$
Since the Jordan decomposition of the above matrix is 
 $$\begin{bmatrix} 
           \alpha & 0 & 0 \\
            0              & \alpha & 0\\
            0           & 0 & \beta \end{bmatrix}
\begin{bmatrix} 
           1 & \frac{1}{\alpha} & 0 \\
            0              & 1& 0\\
            0           & 0 & 1 \end{bmatrix},$$
the Lyapunov projection $\lambda(g)$ of $g$ points to a singular direction in $\sigma_{mod}$.
Since the limit cone is convex and invariant under the opposite involution, it follows that the limit cone $\mathcal L_\rho$ of $\rho(\pi_1(\Sigma))$ is $\bar{\mathfrak{a}}^+$.
\end{proof}

Lemma \ref{quasihyperbolic} implies that any convex projective structure with a quasi-hyperbolic element is not uniformly $\sigma_{mod}$-regular.
This is indeed due to the orbit of a quasi-hyperbolic element. The orbit of a quasi-hyperbolic element is $\sigma_{mod}$-regular but converges to a singular direction. It is possible to see this by a direct computation as follows:
Let $g$ be a quasi-hyperbolic matrix in $\mathrm{PGL}(3,\mathbb R)$. 
As mentioned above, we may assume that  $$g=\begin{bmatrix} 
           \alpha & 1 & 0 \\
            0              & \alpha & 0\\
            0           & 0 & \beta \end{bmatrix},  \begin{array}{l}  \text{where } \alpha, \beta>0, \alpha^2 \beta=1 \\ \text{and }\alpha, \beta \neq 1.\end{array}$$
By a straight computation, the Cartan projection of $g^n$ is written as 
$$\mu(g^n)= \frac{1}{2} (\ln f(n)-\ln 2, \ \ln2+4n\ln \alpha -\ln f(n), \ -4n \ln \alpha),$$
where $f(n)=n^2 \alpha^{2n-2}+2\alpha^{2n}+ \sqrt{n^4\alpha^{4n-4}+4n^2 \alpha^{4n-2}}$.
One can check that the distance between the sequence $(\mu(g^n))$ and any wall in $\bar{\mathfrak a}^+$ goes to infinity. Hence $g^n$ is $\sigma_{mod}$-regular. To find where the sequence converges, let $\theta_n$ be the angle between $\mu(g^n)$ and the singular line $x=y$ on the plane $x+y+z=0$. Then we have that $$\tan \theta_n = \frac{\ln f(n)-2n \ln \alpha - \ln 2}{2\sqrt 3 n \ln \alpha}.$$
By a computation, it can be verified that $$\lim_{n\ra\infty} \frac{\ln f(n)}{n}=2\ln \alpha.$$
Therefore it is derived that $$\lim_{n \ra \infty} \tan \theta_n =0 \text{ and thus }\lim_{n\ra\infty} \theta_n =0.$$
This implies that the sequence $(\mu(g^n))$ converges to a singular point at infinity.
Summarizing the results so far, we have Theorem \ref{cr}

\section{Regularity and Grassmannians}

In order to deal with a more general Lie group $\pgl$, we will look at the actions of regular and singular sequences of elements of $\pgl$ on Grassmannians.

\subsection{Compound matrices}
Let $I^m_k$ denote the set of strictly increasing sequences of $k
$ integers in $\{1,\ldots,m\}$. Let $A$ be an $m\times n$ real matrix. Then for each $\textbf i \in I^m_k$ and $\textbf j \in I^n_k$, define the $k\times k$ submatrix $A[\textbf i, \textbf j]$ of $A$ as $$A[\textbf i, \textbf j]=A \left[ \begin{array}{c} i_1,\ldots,i_k \\ j_1,\ldots, j_k \end{array}\right].$$
Namely, $A[\textbf i, \textbf j]$ is the submatrix of $A$ determined by the rows indexed $i_1,\ldots,i_k$ and columns indexed $j_1,\ldots,j_k$.
The $k$th \emph{compound matrix} of $A$ is defined as the $\binom{m}{k} \times \binom{n}{k}$ matrix with entries $$(\det A[\textbf i, \textbf j])_{\textbf i \in I^m_k,  \ \textbf j \in I^n_k}$$ and is denoted by $C_k(A)$ where index sets are arranged in lexicographic order. 
For an $m\times n$ matrix $A$ and $n \times l$ matrix $B$, it follows from the Cauchy-Binet formula that $$C_k(AB)=C_k(A)C_k(B)$$ for each $k \leq \min\{m,n,l\}$.

\subsection{Grassmannians and Pl\"{u}cker coordinates}\label{grassmannians}
The Grassmannian $G(m,k)$ is defined as the set of $k$-dimensional subspaces of the vector space $\mathbb R^m$. The Grassmannian $G(m,k)$ is described as a subvariety of projective space via the Pl\"{u}cker embedding $\psi : G(m,k) \rightarrow \mathbb P(\wedge^k \mathbb R^m)$ which is defined by $$\mathrm{Span}(v_1,\ldots,v_k) \mapsto [v_1\wedge\cdots\wedge v_k].$$
It is well known that the Pl\"{u}cker embedding is a well-defined map and its image is closed. Indeed it is a subvariety. Let $(e_1,\ldots,e_m)$ be the canonical ordered basis for $\mathbb R^m$. Then for any element of $\omega\in G(m,k)$, $\psi(\omega)$ has a unique representation in the form of $$\psi(\omega)= \sum_{\textbf i \in I^m_k} a_{\textbf i} e_{\textbf i}=\sum_{1\leq i_1 <\cdots<i_r\leq m} a_{i_1,\ldots,i_k}(e_{i_1}\wedge \cdots \wedge e_{i_k}).$$
The homogeneous coordinates $[a_{\textbf i}]$ are called the \emph{Pl\"{u}cker coordinates} on $\mathbb P(\wedge^k \mathbb R^m)$ for $\omega \in G(m,k)$.

Now we will look at the action of $\pgl$ on $\mathbb P(\wedge^k \mathbb R^m)$ via the Pl\"{u}cker coordinates. 
A matrix $A\in \pgl$ defines a map $f_A : \mathbb P(\wedge^k \mathbb R^m) \rightarrow \mathbb P(\wedge^k \mathbb R^m)$ by $$f_A \left(  \sum_{\textbf i \in I^m_k} a_{\textbf i} e_{\textbf i} \right) =  \sum_{\textbf i \in I^m_k} a_{\textbf i} A(e_{\textbf i})=\sum_{1\leq i_1 <\cdots<i_k\leq m} a_{i_1,\ldots,i_k}(Ae_{i_1}\wedge \cdots \wedge Ae_{i_k}).$$
By the definition of $f_A$, we have that for any $v_1, \ldots,v_k \in \mathbb R^m$, $$f_A(v_1\wedge \cdots \wedge v_k)=Av_1\wedge \cdots \wedge Av_k.$$
To look at this in terms of the Pl\"{u}cker coordinates, let $B$ be the $m \times k$ matrix with column vectors $v_1, \ldots, v_k$. Then the Pl\"{u}cker coordinate $a_{\textbf i}$ for $v_1\wedge \cdots \wedge v_k$ is the $k\times k$ minor of $B$ obtained by taking all $k$ columns and the $k$ rows with indices in $\textbf i \in I^m_k$. In other words, the $k$th compound matrix $C_k(B)$ of $B$ is exactly the Pl\"{u}cker coordinates for $v_1\wedge \cdots \wedge v_k$. In a similar way, it can be seen that the $k$th compound matrix $C_k(AB)$ is the Pl\"{u}cker coordinates for $Av_1\wedge \cdots \wedge Av_k$. It follows from the Cauchy-Binet formula that $$C_k(AB)=C_k(A)C_k(B).$$
Hence we have that $$f_A \left(\sum_{\textbf i \in I^m_k} a_{\textbf i}e_{\textbf i}\right) = \sum_{\textbf i \in I^m_k} \left( \sum_{\textbf j \in I^m_k} A(\textbf i,\textbf j) a_{\textbf j} \right) e_{\textbf i}$$ where $A(\textbf i,\textbf j)$ denotes the determinant of $A[\textbf i, \textbf j]$. In conclusion, $f_A$ is the projective linear transformation of $\mathbb P(\wedge^k \mathbb R^m)$ that is represented by $C_k(A)$.

\subsection{Dynamics of $\sigma_{mod}$-regular sequences on Grassmannians}
Let $(g_n)$ be a $\sigma_{mod}$-regular sequence in $\pgl$. We want to see the dynamics of regular sequences on Grassmannians. 
For simplicity, we first assume that $g_n=\mathrm{Diag}(\lambda_{n,1},\ldots,\lambda_{n,m})$ with $\lambda_{n,1}\geq \cdots \geq \lambda_{n,m}>0$ for each $n\in \mathbb N$. 
{\kim Let $K=\mathrm{PO}(m)$, $x=eK\in \pgl/K$ and $K\exp\overline{\mathfrak a}^+K$ be the corresponding Cartan decomposition.
The Euclidean model of Weyl chamber $\Delta$ is canonically identified with $\overline{\mathfrak a}^+$ and $d_\Delta(x,g_nx)=\mu(g_n)$. It is easy to check that $$\overline{\mathfrak a}^+=\{(a_1,\ldots,a_m)\in \R^m \ | \ a_1+\cdots +a_m=0 \text{ and } a_i\geq a_{i+1} \text{ for all }i  \},$$
and the Cartan projection $\mu(g_n)$ of $g_n$ is written by
\[ \mu(g_n)=(\ln \lambda_{n,1}-\ln s_n, \ldots,  \ln \lambda_{n,m}-\ln s_n) \]
where $s_n=\sqrt[m]{\lambda_{n,1}\cdots\lambda_{n,m}}$. 
There are the $(m-1)$ walls of $\overline{\mathfrak a}^+$. For each $i=1,\ldots,m-1$,
define the $i$th wall of $\overline{\mathfrak a}^+$ by 
\[  \partial_i \overline{\mathfrak a}^+ =\{(a_1,\ldots,a_m)\in \overline{\mathfrak a}^+ \ | \ a_i=a_{i+1}  \} \] 
Then the distance of $\mu(g_n)$ from the $i$th wall is $(\ln\lambda_{n,i}-\ln\lambda_{n,i+1})/\sqrt{2}$.
The definition of $\sigma_{mod}$-regularity is equivalent to the condition that the distance of $\mu(g_n)$ from the $i$th wall converges to infinity for all $i=1,\ldots,m-1$, i.e.
}
\begin{eqnarray}\lim_{n\rightarrow \infty} \frac{\lambda_{n,i+1}}{\lambda_{n,i}}=0 \label{eqn:regular}\end{eqnarray} for all $i=1,\ldots,m-1$. 
As described before, the action of $g_n$ on $G(m,k)$ is the projective linear transformation of $\mathbb P(\wedge^k\mathbb R^m)$ with matrix $C_k(g_n)$. Since each $g_n$ is a diagonal matrix, it can be easily seen that its $k$th compound matrix $C_k(g_n)$ is also a diagonal matrix for any $k=1,\ldots,m$. More precisely, 
$$g_n \cdot \left(\sum_{\textbf i \in I^m_k} a_{\textbf i}e_{\textbf i}\right) = \sum_{\textbf i \in I^m_k} \left( \sum_{\textbf j \in I^m_k} g_n(\textbf i,\textbf j) a_{\textbf j} \right) e_{\textbf i}=\sum_{\textbf i \in I^m_k} \lambda_{n,\textbf i} a_{\textbf i} e_{\textbf i}.$$
Here $\lambda_{n,\textbf i}= \lambda_{n, i_1} \cdots \lambda_{n, i_k}$ for $\textbf i=(i_1,\ldots, i_k) \in I^m_k$.

Let $\textbf i_1=(1,\ldots,k)$. Then it is not difficult to see that for any $\textbf i \neq \textbf i_1 \in I^m_k$, $$\lim_{n\rightarrow \infty} \frac{\lambda_{n,\textbf i}}{\lambda_{n,\textbf i_1}}=0.$$
This implies that if $a_{\textbf i_1} \neq 0$,
$$\lim_{n\rightarrow \infty}g_n \cdot \left[ \sum_{\textbf i \in I^m_k} a_{\textbf i}e_{\textbf i} \right]= \left[ \sum_{\textbf i \in I^m_k} \lambda_{n,\textbf i} a_{\textbf i} e_{\textbf i} \right]= \left[ \sum_{\textbf i \in I^m_k} \frac{\lambda_{n,\textbf i}}{\lambda_{n,\textbf i_1}} a_{\textbf i} e_{\textbf i} \right] = [e_{\textbf i_1}]$$
where $[\textbf v]$ denotes the point of $\mathbb P(\wedge^k \mathbb R^m)$ corresponding to $\textbf v \in \wedge^r \mathbb R^m$.

\begin{proposition}\label{prop:reggras}
Let $(g_n)$ be a $\sigma_{mod}$-regular sequence in $\pgl$. Then there exists a subsequence $(g_{n_i})$ such that for each $k=1,\ldots,m-1$, there is a hyperplane $H^-_k$ of $\mathbb P(\wedge^k \mathbb R^m)$ such that on the complement of $H^-_k$ the sequence of maps $(g_{n_i})$ converges pointwise to a constant map. 
\end{proposition}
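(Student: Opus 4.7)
The plan is to reduce the general case to the diagonal case already handled in the paragraphs immediately preceding the proposition. By the Cartan decomposition, write each $\gamma_k = s_k a_k t_k$ with $s_k, t_k \in K = \mathrm{PO}(n)$ and $a_k = \mathrm{Diag}(\lambda_{k,1},\ldots,\lambda_{k,n})$ having entries in decreasing order. The $\sigma_{mod}$-regularity of $(\gamma_k)$ transfers to $(a_k)$, so \eqref{eqn:regular} holds for the $\lambda_{k,i}$. Since $K$ is compact, pass to a subsequence (still written $(\gamma_k)$) so that $s_k \to s_\infty$ and $t_k \to t_\infty$ in $K$.

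First I would upgrade the pointwise convergence of $a_k$ on $\mathbb{P}(\wedge^r\mathbb R^n)$ to uniform convergence on compacta of its domain of convergence. With $\mathbf i_1 = (1,\ldots,r)$, set $H^-_0 = \{a_{\mathbf i_1} = 0\} \subset \mathbb{P}(\wedge^r\mathbb R^n)$, the zero locus of the first Pl\"ucker coordinate. The explicit formula
\[
a_k \cdot \Big[\sum_{\mathbf i} a_{\mathbf i} e_{\mathbf i}\Big] = \Big[\sum_{\mathbf i} \frac{\lambda_{k,\mathbf i}}{\lambda_{k,\mathbf i_1}} a_{\mathbf i} e_{\mathbf i}\Big]
\]
together with $\lambda_{k,\mathbf i}/\lambda_{k,\mathbf i_1} \to 0$ for every $\mathbf i \neq \mathbf i_1$ shows that the convergence to the constant $[e_{\mathbf i_1}]$ is uniform on any compact set $C \subset \mathbb{P}(\wedge^r\mathbb R^n) \setminus H^-_0$, because $|a_{\mathbf i_1}|$ is bounded below on $C$ while the other coefficients are bounded.

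Now define $H^- := t_\infty^{-1} \cdot H^-_0$, which is a hyperplane in $\mathbb{P}(\wedge^r \mathbb R^n)$ since $C_r(t_\infty)$ is an invertible projective linear map. Fix $[v] \notin H^-$; then $t_\infty \cdot [v]$ lies in the open set $\mathbb{P}(\wedge^r \mathbb R^n)\setminus H^-_0$, so $t_k \cdot [v] \to t_\infty \cdot [v]$ remains in a compact neighborhood $C$ of $t_\infty \cdot [v]$ disjoint from $H^-_0$ for all large $k$. The elementary fact that uniform convergence $a_k|_C \to [e_{\mathbf i_1}]$ together with $t_k \cdot [v] \to t_\infty\cdot [v] \in C$ implies $a_k(t_k \cdot [v]) \to [e_{\mathbf i_1}]$; applying $s_k \to s_\infty$ by continuity finally gives
\[
\gamma_k \cdot [v] = s_k \cdot a_k \cdot t_k \cdot [v] \;\longrightarrow\; s_\infty \cdot [e_{\mathbf i_1}].
\]
Thus on the complement of $H^-$, the maps $\gamma_k$ converge pointwise to the constant $s_\infty \cdot [e_{\mathbf i_1}]$, as required.

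The only delicate point is the promotion of pointwise convergence in the diagonal case to \emph{locally uniform} convergence, which is needed because the evaluation point $t_k \cdot [v]$ is itself moving. This is the main obstacle, but it is resolved cleanly by the observation above about the lower bound on $|a_{\mathbf i_1}|$ on compacta of $\mathbb{P}(\wedge^r\mathbb R^n)\setminus H^-_0$; no finer analysis is needed.
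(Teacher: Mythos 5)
Your proof is correct and follows essentially the same route as the paper: Cartan decomposition $\gamma_k = s_k a_k t_k$, passing to a subsequence so that $s_k, t_k$ converge in $K$, and taking $H^- = t_\infty^{-1}(H_1)$ with $H_1$ the zero locus of the leading Pl\"ucker coordinate. The only difference is that you spell out the locally uniform convergence needed because the evaluation point $t_k\cdot[v]$ moves, a detail the paper compresses into ``it can be easily shown''; your handling of it is correct.
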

\begin{proof}
Due to the Cartan decomposition of $\pgl$, each $g_n$ can be written as $g_n=s_n a_n t_n$ for some $s_n, t_n \in K$ and $a_n \in \exp \bar{\mathfrak a}^+$.
By passing to a subsequence, we may assume that $s_n$ and $t_n$ converge to $s_\infty$ and $t_\infty$ in $K$ respectively. For each $k=1,\ldots,m-1$, we put $H^-_k= t_\infty^{-1}(P_k)$ where $P_k$ is the hyperplane of $\mathbb P(\wedge^k \mathbb R^m)$ corresponding to $a_{\textbf i_1}=0$ in the Pl\"{u}cker coordinates. Then it can be easily shown that for every point $\omega \notin H^-_k$, $$\lim_{n\rightarrow \infty} g_n \cdot \omega= [s_\infty e_{\textbf i_1}].$$ This implies the proposition.
\end{proof}

We now look at the case that a sequence $(g_n)$ is not $\sigma_{mod}$-regular. 
For simplicity, as before, we first assume that every $g_n$ is a diagonal matrix given by $\mathrm{Diag}(\lambda_{n,1},\ldots,\lambda_{n,m})$  with $\lambda_{n,1}\geq \cdots \geq \lambda_{n,m}>0$.
If a sequence $(g_n)$ is not $\sigma_{mod}$-regular, then for some $ k_0 \in \{1,\ldots,m-1\}$ the property of (\ref{eqn:regular}) fails. Namely,
\begin{equation}\label{singular} \lim_{n\rightarrow \infty} \frac{\lambda_{n,k_0+1}}{\lambda_{n,k_0}}=c \text{ for some } c>0. \end{equation}
We may assume that $k_0$ is the smallest number for which (\ref{eqn:regular}) fails. Looking at the action of $g_n$ on $\mathbb P(\wedge^{k_0}\mathbb R^m)$, 
\begin{equation}\label{eqn6} \lim_{n\rightarrow \infty} \frac{\lambda_{n,\textbf i_2}}{\lambda_{n,\textbf i_1}}=\lim_{n\rightarrow \infty} \frac{\lambda_{n,k_0+1}}{\lambda_{n,k_0}}=c \end{equation}
 where $\textbf i_1=(1,\ldots,k_0)$ and $\textbf i_2 = (1,\ldots,k_0-1,k_0+1)$.
Due to $c\neq 0$, if $$\lim_{n\rightarrow \infty}g_n\cdot \omega_1 = \lim_{n\rightarrow \infty}g_n\cdot \omega_2$$ for $\omega_1, \omega_2 \in \mathbb P(\wedge^{k_0}\mathbb R^m) \setminus P_{k_0}$, then $\omega_1$ and $\omega_2$ must be contained in a hyperplane of $\mathbb P(\wedge^{k_0}\mathbb R^m)$ for which the ratio of the $a_{\textbf i_2}$-coordinate to the $a_{\textbf i_1}$-coordinate is constant. In summary we have the following proposition.

\begin{proposition}\label{regular}
Let $(g_n)$ be an infinite sequence in $\pgl$. Suppose that for each $k=1,\ldots, m-1$, there exists a {\kim basis} of $\wedge^k \mathbb R^m$ such that the limit of the sequence of maps $(g_n)$ sends all elements of the basis to one point in $\mathbb P(\wedge^k \mathbb R^m)$. Then $(g_n)$ is $\sigma_{mod}$-regular.
\end{proposition}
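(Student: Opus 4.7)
The plan is to argue by contrapositive: assuming $(\gamma_k)$ is not $\sigma_{mod}$-regular, I will exhibit some $r$ with $0<r<n$ for which no frame of $\wedge^r \mathbb R^n$ can be collapsed to a single projective point by the maps $\gamma_k$. Using the Cartan decomposition $\gamma_k = s_k a_k t_k$ with $a_k = \mathrm{Diag}(\lambda_{k,1},\ldots,\lambda_{k,n})$ in non-increasing order and $s_k, t_k \in K$, and passing to a subsequence, I would arrange that $s_k \to s_\infty$, $t_k \to t_\infty$ in $K$ and that, for some $r_0 \in \{1,\ldots,n-1\}$, one has $\lambda_{k,r_0+1}/\lambda_{k,r_0} \to c$ with $0<c\le 1$. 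This index $r_0$ captures the precise failure of $\sigma_{mod}$-regularity.

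I would then look at the action on $\mathbb{P}(\wedge^{r_0} \mathbb R^n)$, where $C_{r_0}(a_k)$ is diagonal in the Pl\"ucker basis with entries $\lambda_{k,\textbf i}$. Writing $\textbf i_1 = (1,\ldots,r_0)$ and $\textbf i_2 = (1,\ldots,r_0-1,r_0+1)$, the largest diagonal entry is $\lambda_{k,\textbf i_1}$, and $\lambda_{k,\textbf i_2}/\lambda_{k,\textbf i_1} = \lambda_{k,r_0+1}/\lambda_{k,r_0} \to c > 0$. Passing to a further subsequence so that every ratio $\lambda_{k,\textbf i}/\lambda_{k,\textbf i_1}$ converges, $C_{r_0}(a_k)/\lambda_{k,\textbf i_1}$ converges to a diagonal matrix $D$ with $D_{\textbf i_1,\textbf i_1}=1$ and $D_{\textbf i_2,\textbf i_2}=c>0$, so $\mathrm{rank}(D) \ge 2$. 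Consequently $C_{r_0}(\gamma_k)/\lambda_{k,\textbf i_1} \to M := C_{r_0}(s_\infty) D C_{r_0}(t_\infty)$ in $\mathbb{P}(\mathrm{End}(\wedge^{r_0}\mathbb R^n))$, and since $C_{r_0}(s_\infty)$ and $C_{r_0}(t_\infty)$ are invertible, $\mathrm{rank}(M) = \mathrm{rank}(D) \ge 2$. By the standard projective-limit argument in the spirit of Theorem~\ref{Benzecri}, for every $[v] \notin \ker M$ one has $\gamma_k [v] \to [Mv]$.

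Suppose now for contradiction that the frame-collapse hypothesis held at $r=r_0$: a basis $\{v_1,\ldots,v_N\}$ of $\wedge^{r_0}\mathbb R^n$, $N=\binom{n}{r_0}$, with $\gamma_k [v_i] \to p$ for every $i$. For each $v_i \notin \ker M$ the limit forces $[Mv_i] = p$, so $Mv_i$ lies on the 1-dimensional line $\langle w \rangle$ representing $p$; for $v_i \in \ker M$ we simply have $Mv_i = 0 \in \langle w \rangle$. Since $\{v_i\}$ spans $\wedge^{r_0}\mathbb R^n$, the image of $M$ equals $\mathrm{span}\{Mv_1,\ldots,Mv_N\} \subseteq \langle w \rangle$, forcing $\mathrm{rank}(M) \le 1$ and contradicting $\mathrm{rank}(M) \ge 2$.

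The main technical point to handle carefully is the fate of frame vectors $v_i$ that happen to lie in $\ker M$: the projective convergence $\gamma_k [v_i] \to p$ gives no direct information about such vectors, because the Benz\'ecri-type limit formula $\gamma_k[v] \to [Mv]$ only applies off the kernel. This is precisely what the spanning property of the frame is designed to circumvent, since the image of $M$ is computed as $\mathrm{span}\{Mv_i\}$ irrespective of which $Mv_i$ vanish.
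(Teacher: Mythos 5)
Your proof is correct, and it follows the same overall strategy as the paper's: take the Cartan decomposition $\gamma_k = s_k a_k t_k$, pass to a subsequence so $s_k \to s_\infty$, $t_k \to t_\infty$, locate the first index $r_0$ where $\lambda_{k,r_0+1}/\lambda_{k,r_0} \to c > 0$, and derive a contradiction from the action of $C_{r_0}(\gamma_k)$ on $\mathbb{P}(\wedge^{r_0}\mathbb{R}^n)$. Where you differ is in the final step. The paper argues that if two points $\omega_1,\omega_2$ outside the exceptional hyperplane $H^-$ have the same limit, then $t_\infty(\omega_1)$ and $t_\infty(\omega_2)$ lie in a common hyperplane (constant ratio of the $\mathbf{i}_2$- to the $\mathbf{i}_1$-coordinate), and concludes that a frame cannot collapse to a point; as written, that conclusion quietly ignores frame vectors that land in $H^-$, and a basis \emph{can} in principle be covered by the union of $H^-$ and one such constant-ratio hyperplane. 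Your reformulation via the rank of the limiting endomorphism $M = C_{r_0}(s_\infty)\,D\,C_{r_0}(t_\infty)$ closes exactly this loophole: vectors in $\ker M$ contribute $0$ to the image, so the spanning property of the frame forces $\operatorname{rank} M \le 1$, contradicting $\operatorname{rank} M \ge 2$. So your argument buys a cleaner and airtight treatment of the exceptional set at the cost of no additional machinery; the paper's version is shorter but leaves that case to the reader.
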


\begin{proof}
Let $g_n=s_n a_n t_n$ be the Cartan decomposition of $g_n$. By passing to a subsequence, we assume that $s_n$ and $t_n$ converge to $s_\infty$ and $t_\infty$ respectively.
{\kim We follow the notation of the proof of Proposition \ref {prop:reggras}.} Assume that $(g_n)$ is not $\sigma_{mod}$-regular. Then from the observation above, there is a number $0<k_0<n$ such that (\ref{eqn:regular}) fails. Furthermore
if $$\lim_{n \ra \infty} g_n \cdot \omega_1 =\lim_{n\ra \infty} g_n \cdot \omega_2$$ for $\omega_1, \omega_2 \in \mathbb P(\wedge^{k_0}\mathbb R^m) \setminus H^-_{k_0}$, {\kim as is shown just before Proposition \ref{regular}}, $t_\infty(\omega_1)$ and $t_\infty(\omega_2)$ must be contained in a hyperplane of $\mathbb P(\wedge^{k_0}\mathbb R^m)$.
This implies that any {\kim basis} of $\wedge^{k_0}\mathbb R^m$ can never converge to one point in $\mathbb P(\wedge^{k_0} \mathbb R^m)$.
This makes a contradiction to the assumption. Therefore $(g_n)$ with the property in the proposition must be $\sigma_{mod}$-regular.
\end{proof}

Proposition \ref{regular} provides a tool to check the $\sigma_{mod}$-regularity of a sequence. This will be useful in proving the $\sigma_{mod}$-regularities of positive representations later.
Note that in order to apply Proposition \ref{regular} to a sequence, we need information about the action of a sequence on each projective space $\mathbb P(\wedge^k\mathbb R^m)$. In other words it is possible to prove the $\sigma_{mod}$-regularity of a sequence only with information about the action of a sequence on each Grassmannian as follows.

\begin{corollary}\label{cor6.3}
Let $(g_n)$ be a sequence in $\pgl$. Suppose that for each $k=1,\ldots,m-1$, there exists a Zariski-dense subset of $G(m,k)$ such that the sequence of maps $g_n$ converges to a constant map on the Zariski-dense subset. Then the sequence $(g_n)$ is $\sigma_{mod}$-regular.
\end{corollary}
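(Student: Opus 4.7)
The plan is to reduce Corollary \ref{cor6.3} to Proposition \ref{regular} by extracting, from each Zariski-dense subset $Z_r \subset G(n,r)$, enough decomposable vectors to form a basis of $\wedge^r \mathbb R^n$. Set $N_r := \binom{n}{r}$. Once we produce $\omega_1^{(r)},\ldots,\omega_{N_r}^{(r)} \in Z_r$ whose Pl\"ucker images span $\wedge^r \mathbb R^n$, the hypothesis that $\gamma_k$ converges to a constant on $Z_r$ will supply a frame of $\wedge^r \mathbb R^n$ sent by $\gamma_k$ to a single point of $\mathbb P(\wedge^r \mathbb R^n)$, and Proposition \ref{regular} will deliver $\sigma_{mod}$-regularity.

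The first step is to note that the standard basis $\{e_{\textbf i} : \textbf i \in I^n_r\}$ of $\wedge^r \mathbb R^n$ is decomposable, i.e.\ lies in $\psi(G(n,r))$, so at least one $N_r$-tuple in $G(n,r)$ has Pl\"ucker images that are linearly independent in $\wedge^r \mathbb R^n$. Linear independence of $N_r$ vectors is the non-vanishing of their $N_r \times N_r$ Pl\"ucker-coordinate determinant; hence the locus
\[
U_r := \left\{(\omega_1,\ldots,\omega_{N_r}) \in G(n,r)^{N_r} : \psi(\omega_1),\ldots,\psi(\omega_{N_r}) \text{ are linearly independent}\right\}
\]
is a nonempty Zariski-open subvariety of $G(n,r)^{N_r}$.

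Second, because $Z_r$ is Zariski-dense in $G(n,r)$, the product $Z_r^{N_r}$ is Zariski-dense in $G(n,r)^{N_r}$, and therefore meets $U_r$. Pick $\omega_1^{(r)},\ldots,\omega_{N_r}^{(r)} \in Z_r$ whose Pl\"ucker images form a basis of $\wedge^r\mathbb R^n$. By the hypothesis of the corollary there is a point $p_r \in \mathbb P(\wedge^r\mathbb R^n)$ with $\gamma_k \cdot \omega_j^{(r)} \to p_r$ as $k \to \infty$ for every $j=1,\ldots,N_r$. Doing this for each $0<r<n$ yields, for every such $r$, a frame of $\wedge^r\mathbb R^n$ sent by $\gamma_k$ to a single point of $\mathbb P(\wedge^r\mathbb R^n)$. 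Proposition \ref{regular} now applies and gives that $(\gamma_k)$ is $\sigma_{mod}$-regular.

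The main obstacle is precisely the gap between $G(n,r)$ and $\mathbb P(\wedge^r\mathbb R^n)$: for $1<r<n-1$ the Grassmannian is a proper subvariety, so Zariski-density in $G(n,r)$ does not immediately translate to Zariski-density in $\mathbb P(\wedge^r\mathbb R^n)$, which is what Proposition \ref{regular} hypothesizes. The extraction step above circumvents this by using only that $\psi(G(n,r))$ linearly spans $\wedge^r\mathbb R^n$ together with Zariski-density on $G(n,r)^{N_r}$; this is the only place where one must be careful, and it is where the decomposability of the standard basis vectors $e_{\textbf i}$ is essential.
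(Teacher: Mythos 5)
Your proof is correct, and it takes a genuinely different route from the paper's. The paper proves Corollary \ref{cor6.3} directly: assuming non-regularity at some $r_0$, it observes that any two points of $G(n,r_0)$ with the same limit must lie in $G(n,r_0)\cap t_\infty^{-1}H_y$ for some $y$, which is a proper closed subvariety of the Grassmannian, so a Zariski-dense subset cannot converge to a single point; in other words, it reruns the hyperplane analysis of Proposition \ref{regular} intrinsically on $G(n,r_0)$ rather than on all of $\mathbb P(\wedge^{r_0}\mathbb R^n)$. You instead reduce the corollary to Proposition \ref{regular} by extracting from $Z_r$ a set of $N_r=\binom{n}{r}$ decomposable vectors forming a basis of $\wedge^r\mathbb R^n$, and you correctly identify the one real issue — that for $1<r<n-1$ the Grassmannian is a proper subvariety of $\mathbb P(\wedge^r\mathbb R^n)$, so density does not transfer — and dispose of it using that $\psi(G(n,r))$ linearly spans $\wedge^r\mathbb R^n$ (it contains the $e_{\mathbf i}$). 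All steps check out: the non-spanning locus in $G(n,r)^{N_r}$ is closed, and a product of Zariski-dense sets is Zariski-dense in the product, so $Z_r^{N_r}$ meets the nonempty open set $U_r$. Your route buys a cleaner logical structure (no repetition of the limit analysis, only linear algebra plus a citation), at the cost of inheriting whatever care is needed in the proof of Proposition \ref{regular} itself; a small simplification is available in your density step, since it suffices to note that the Zariski closure of $\psi^{-1}(\mathbb P(\mathrm{span}\,\psi(Z_r)))\cap G(n,r)$ contains $Z_r$ and hence equals $G(n,r)$, forcing $\psi(Z_r)$ to span $\wedge^r\mathbb R^n$, from which a basis can be extracted without passing to the product variety.
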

\begin{proof}
We think of $G(m,k)$ as a subvariety of $\mathbb P(\wedge^k\mathbb R^m)$ via the Pl\"{u}cker coordinates. Obviously, the action of $\pgl$ on $\mathbb P(\wedge^k\mathbb R^m)$ preserves the subvariety $G(m,k)$. 
Suppose that the sequence $(g_n)$ is not $\sigma_{mod}$-regular. Then for some $k_0$, equation (\ref{eqn6}) holds. We follow the notations of (\ref{eqn6}).
Let $H_y$ be a hyperplane of $\mathbb P(\wedge^{k_0}\mathbb R^m)$ for which the ratio of the $a_{\textbf i_2}$-coordinate to the $a_{\textbf i_1}$-coordinate is $y\in \mathbb R$.
It is easy to see that $G(m,{k_0})\cap H_y$ is a subvariety of $G(m,{k_0})$ with codimension $1$. 
Let $g_n=s_n a_n t_n$ be the Cartan decomposition of $g_n$. Assume that $s_n$ and $t_n$ converge to $s_\infty$ and $t_\infty$ respectively.
If $$\lim_{n\ra \infty}g_n \cdot \omega_1= \lim_{n\ra \infty}g_n \cdot \omega_2$$ for $\omega_1, \omega_2 \in G(m,{k_0})$, then $\omega_1$ and $\omega_2$ must be contained in the subvariety $t_\infty^{-1}\cdot ( G(m,{k_0})\cap H_y)=G(m,{k_0})\cap t_\infty^{-1} \cdot H_y$  of $G(m,{k_0})$ for some $y\in \mathbb R$. This means that any Zariski-dense subset of $G(m,{k_0})$ can never converge to one point in $G(m,{k_0})$. This implies the corollary.
\end{proof}

We have seen so far a sufficient condition for a sequence to be $\sigma_{mod}$-regular in terms of the Grassmannian. Applying it to positive representations, we have the following proposition.

\begin{proposition}\label{thm:regular}
Every positive representation is $\sigma_{mod}$-regular.
\end{proposition}

\begin{proof}
Let $\rho : \pi_1(\Sigma) \ra \pgl$ be a positive representation for a compact, connected, orientable surface $\Sigma$ (possibly with boundary) of negative Euler characteristic. Let $\xi :\partial_\infty \pi_1(\Sigma)_l \ra \flagv$ be the positive $\rho$-equivariant homeomorphism associated to $\rho$ for some $l$. {\kim As mentioned in Section \ref{sec:frenet}, $\xi$ is the restriction of a Frenet curve.}
Let $(\gamma_n)$ be an arbitrary sequence of elements of $\pi_1(\Sigma)$. Let $\gamma_n^+$ and $\gamma_n^-$ denote the attracting fixed point and repelling fixed point of $\gamma_n$ on $\partial_\infty \pi_1(\Sigma)_l$ respectively. Assume that $\gamma_n^+$ and $\gamma_n^-$ converge to $\gamma_\infty^+$ and $\gamma_\infty^-$ respectively. Note that $\gamma_\infty^+$ and $\gamma_\infty^-$ might be equal. Then on $\partial_\infty \pi_1(\Sigma)_l \setminus \{\gamma_\infty^-\}$, the sequence of maps $\gamma_n$ converges to the constant map, which sends all of $\partial_\infty \pi_1(\Sigma)_l \setminus \{\gamma_\infty^-\}$ to $\gamma_\infty^+$. 

Since $\xi$ is a restriction of a Frenet curve, it is possible to choose $m$ points $x_1,\ldots, x_m$ on $\partial_\infty \pi_1(\Sigma)_l \setminus \{\gamma_\infty^-\}$ so that $\{\xi^{1}(x_1),\ldots,\xi^{1}(x_m)\}$ is a basis of $\mathbb R^m$. Then for each $k=1,\ldots,m-1$, the set $$\mathcal B_k=\{\xi^{(1)}(x_{i_1}) \wedge \cdots \wedge \xi^{(1)}(x_{i_k}) \ | \ 1\leq i_1<\cdots<i_k \leq m\}$$ is a basis for $\wedge^k \mathbb R^m$. Noting that for all $i=1,\ldots,m$, $$\lim_{n\ra \infty} \gamma_n \cdot x_i = \gamma_\infty^+$$ and $\xi$ is a restriction of a Frenet curve, it follows from the property (\ref{hyperconvex}) of Frenet curve that $$\lim_{n\ra \infty} \left[\bigoplus_{j=1}^k \rho(\gamma_n) \cdot \xi^{(1)}(x_{i_j}) \right]=\lim_{n\ra \infty} \left[\bigoplus_{j=1}^k \xi^{(1)}(\gamma_n\cdot x_{i_j}) \right]=\xi^{(k)}(\gamma_\infty^+).$$ 
In other words, the sequence of maps $\rho(\gamma_n)$ sends all elements of $\mathcal B_k$ to one point $\xi^{(k)}(\gamma_\infty^+)$ for each $k=1,\ldots,m-1$.
By Proposition \ref{regular}, it immediately follows that the sequence $(\rho(\gamma_n))$ is $\sigma_{mod}$-regular. We completes the proof.
\end{proof}

We have proved the $\sigma_{mod}$-regularities of convex projective structures and positive representations.
The $\sigma_{mod}$-regularity implies discrete length spectrum as follows.

\begin{proof}[Proof of Corollary \ref{cor:1.6}] 
{\kim Let $S=\mathbb H^2/\Gamma$ be a hyperbolic surface and $\Lambda_\Gamma \subset \partial_\infty \mathbb H^2$ be the limit set of $\Gamma$.
Every positive representation $\rho:\Gamma\to \pgl$ admits a positive $\rho$-equivariant continuous map $\xi: \Lambda_\Gamma\to \mathcal F(\R^m)$.
In particular, if $\rho$ is type-preserving, then $\xi$ is indeed a homeomorphism.

Let $S_0$ be the subsurface of $S$ obtained by cutting off all cusp regions in $S$. 
Choose a fundamental domain $M_0$ in $\mathbb H^2$ corresponding to $S_0$. This can be obtained by cutting off cusp regions from a fundamental domain of $S$ in $\mathbb H^2$.
Hence $M_0$ is compact. Then for any closed geodesic $c$ on $S$,
there exists a lift $\tilde c$ to $\mathbb H^2$ such that $\tilde c\cap M_0\neq \emptyset$.
Let $\gamma_{c} \in \Gamma$ be the positive hyperbolic element corresponding to $c$ which has $\tilde c$ as the invariant axis of $\gamma_{ c}$ and is compatible with the orientation on $c$. 
Clearly, $\tilde c$ is the unique oriented  bi-infinite geodesic from $\gamma_c^-$ to $\gamma_c^+$.
Let $(\Lambda_\Gamma\times \Lambda_\Gamma)'$ denote the set of pairs of distinct points of $\Lambda_\Gamma$.
Since $\tilde c$ intersects the compact subset $M_0$ of $\mathbb H^2$ for all oriented closed geodesics $c$ on $S$, it can be easily seen that the set of all pairs $(\gamma_c^+,\gamma_c^-)$ where $c$ is an oriented closed geodesic on $S$ is a relatively compact subspace in $(\Lambda_\Gamma\times \Lambda_\Gamma)'$.

Let $p$ and $q$ be distinct points of $\Lambda_\Gamma$. Then by the positivity of $\xi$, it follows that $\xi(p)$ and $\xi(q)$ are opposite and hence there is a unique maximal flat whose ideal boundary contains $\xi(p)$ and $\xi(q)$, denoted by $P(\xi(p),\xi(q))$.
We consider the map from $(\Lambda_\Gamma\times \Lambda_\Gamma)'$ to $\mathbb R$ defined as
\[ (p,q)\mapsto d_X(x,  P(\xi(p),\xi(q))). \]
The map is continuous. By the relative compactness of the set of all pairs $(\gamma_c^+,\gamma_c^-)$ for oriented closed geodesics $c$ on $S$, there is a uniform constant $D>0$ such that for every oriented closed geodesic $c$, \[ d_X(x, P(\xi(\gamma_c^+),\xi(\gamma_c^-)))\leq D.\] 
From the $\rho$-equivariance of $\xi$ and $\gamma_c\cdot \gamma_c^\pm=\gamma_c^\pm$, 
\begin{align*} \rho(\gamma_c)\cdot P(\xi(\gamma_c^+),\xi(\gamma_c^-)) &= P(\rho(\gamma_c)\cdot \xi(\gamma_c^+),\rho(\gamma_c)\cdot \xi(\gamma_c^-)) \\
& =P(\xi(\gamma_c\cdot \gamma_c^+),\xi(\gamma_c\cdot \gamma_c^-))=P(\xi(\gamma_c^+),\xi(\gamma_c^-)).
\end{align*}
In other words, the maximal flat $P(\xi(\gamma_c^+),\xi(\gamma_c^-))$ is the maximal flat invariant under the action $\rho(\gamma_c)$ on $X$. The action of $\rho(\gamma_c)$ on $P(\xi(\gamma_c^+),\xi(\gamma_c^-))$ is a translation which involves only the eigenvalues of $\rho(\gamma_c)$.
Since $\rho(\gamma_c)$ is hyperbolic, the translation vector for the action of $\rho(\gamma_c)$ on $P(\xi(\gamma_c^+),\xi(\gamma_c^-))$ is $\lambda(\rho(\gamma_c))$ i.e.,
for any point $y \in P(\xi(\gamma_c^+),\xi(\gamma_c^-))$, \[d_\Delta(y, \rho(\gamma_c)\cdot y)=\lambda(\rho(\gamma_c)) \]
 where $\lambda : \pgl \to \overline{\mathfrak a}^+$ is the Lyapunov projection.
 Choose $\bar x_c$ in $P(\xi(\gamma_c^+),\xi(\gamma_c^-))$ such that $d_X(x, P(\xi(\gamma_c^+),\xi(\gamma_c^-)))=d_X(x, \bar x_c)$.
Taking $K=\mathrm{PO}(m)$ and $x=eK\in \pgl/K=X$, we also have $d_\Delta(x, \rho(\gamma_c)\cdot x)=\mu(\rho(\gamma_c))$. Let $\|\cdot\|$ be the Euclidean metric on $\mathfrak a$. By the triangle inequality (see for instance \cite[Remark 5.5]{KLP}) $$\|d_\Delta(x,y)-d_\Delta(x',y')\|\leq d_X(x,x')+d_X(y,y'),$$
we have
\begin{align} 
\|\mu(\rho(\gamma_c)) -\lambda(\rho(\gamma_c)) \| &= \|d_\Delta(x, \rho(\gamma_c)\cdot x)-d_\Delta(\bar x_c,  \rho(\gamma_c)\cdot \bar x_c) \| \label{eqn:app}  \\
&\leq d_X(x,\bar x_c)+d_X(\rho(\gamma_c)\cdot x,\rho(\gamma_c)\cdot \bar x_c) \nonumber \\
&=2d_X(x,\bar x_c) \leq 2D.  \nonumber
\end{align}

We are now ready to prove the corollary.
Suppose the simple $\ell_i$-spectrum for $\rho$ is not discrete for some $i$. Then there exists an infinite sequence $(c_n)$ of closed geodesics on $S$ such that 
$\ell_i(\rho(\gamma_n))$ accumulates where we write $\gamma_n=\gamma_{c_n}$ for simplicity. 
Note that $\ell_i(\rho(\gamma_n))$ is $\sqrt 2$ times of the distance of $\lambda(\rho(\gamma_n))$ from the $i$th wall of $\overline{\mathfrak a}^+$.
By (\ref{eqn:app}), the distance between $\mu(\rho(\gamma_n))$ and $\lambda(\rho(\gamma_n))$ is uniformly bounded. This implies that once the distance of $\lambda(\rho(\gamma_n))$ from the $i$th wall of $\overline{\mathfrak a}^+$ accumulates, so does the distance of $\mu(\rho(\gamma_n))$ from the $i$th wall of $\overline{\mathfrak a}^+$.
However, by the $\sigma_{mod}$-regularity of $\rho$, the distance of $\mu(\rho(\gamma_n))$ from the $i$th wall of $\overline{\mathfrak a}^+$ can not accumulate for any $i=1,\ldots,m-1$ (see Remark \ref{rem:regiff}).
Therefore we conclude that the simple $\ell_i$-spectrum for $\rho$ can not accumulate i.e. it is discrete for all $i=1,\ldots, m-1$.
}
\end{proof}

\section{Primitive stable representations}

We have seen the $\sigma_{mod}$-regularity for convex projective structures and moreover positive representations.
In this section, we prove that they are $\sigma_{mod}$-primitive stable.
Gu\'eritaud, Guichard, Kassel and Wienhard suggested the definition of primitive stable representation in higher rank in \cite[Remark 1.6]{GGKW}. Here we give a definition of primitive stable representation in terms of Morse quasigeodesics introduced by Kapovich, Leeb and Porti \cite{KLP2}, which is equivalent to the previous definitions.

Let $\Gamma$ be a non-abelian free group.
Given a representation $\rho : \Gamma \rightarrow G$ and a basepoint $x \in X$, a $\rho$-equivariant orbit map $\tau_{\rho,x} : \mathcal C(\Gamma,S) \ra X$ is defined by $\tau_{\rho,x}(w)=\rho(w)\cdot x$. 

\begin{definition}
A representation $\rho:\Gamma \rightarrow G$ is $\tau_{mod}$-\emph{primitive stable} if there exist constants $L,A,D$ and a compact set $\Theta \subset \text{ost}(\tau_{mod})$ for some face $\tau_{mod}$ of the model Weyl chamber $\sigma_{mod}$ and a basepoint $x\in X$ such that the orbit map $\tau_{\rho,x}$ takes all bi-infinite primitive geodesics to $(L,A,\Theta,D)$-Morse quasigeodesics.
\end{definition}

In hyperbolic $3$-manifold theory, there are two important results on primitive stable representations. The first is the stableness of primitive stable representations in character variety and the second is the properness of the action of the outer automorphism group of a free group on the space of primitive stable representations. These two properties are extended to higher rank symmetric spaces by combining Minsky's idea in \cite{Minsky} with the work of Kapovich-Leeb-Porti in \cite{KLP2} as follows.

\begin{theorem}\label{thm:disc} Let $\Gamma$ be a non-abelian free group and $G$  a semisimple Lie group without compact factors. Then the set $\mathcal{PS}(\Gamma,G)$ of primitive stable representations is open in the character variety of $\Gamma$ in $G$, and the action of the outer automorphism group of $\Gamma$ on $\mathcal{PS}(\Gamma,G)$ is properly discontinuous.
\end{theorem}

\begin{proof}[Sketch of proof]
For reader's convenience, we recall their works and then sketch a proof briefly.
The openness of primitive stable representations follows from \cite[Theorem 7.33]{KLP} that a local Morse quasigeodesic is a global Morse quasigeodesic. 
Let $\rho: \Gamma \rightarrow G$ be a primitive stable representation. Fix a word metric on the Cayley graph of a group $\Gamma$ and consider the orbit map for a fixed base point $x\in X$. Then there exist constants $(L,A, \Theta, D)$ such that any bi-infinite geodesic defined by a primitive element
is mapped to an $(L,A,\Theta,D)$-Morse quasigeodesic. Then for any $S>0$, every primitive bi-infinite geodesic  is mapped by the orbit map to a $(L,A, \Theta, D,S)$-local Morse quasigeodesic. 
{\kim The local Morse property for primitive bi-infinite geodesics involves only finite orbit points due to $\Gamma$-equivariance. Hence all representations sufficiently close to $\rho$ preserve the local Morse property under the relaxed Morse parameters. Then the local to global property for Morse quasigeodesics in \cite[Theorem 7.26]{KLP} implies that for all representations sufficiently close to $\rho$, any primitive bi-infinite geodesic is mapped to an $(L',A',\Theta',D')$-Morse quasigeodesic} for some Morse parameters $(L',A',\Theta',D')$, i.e., they are primitive stable. We refer the reader to \cite[Section 7]{KLP} for more detailed proof about this.

The properness of the action of the outer automorphism group of $\Gamma$ on the space of primitive stable representations follows from Minsky's idea in \cite{Minsky}. Just for completeness we give an outline.
By the definition of the primitive stability of $\rho$, there exists $r=r(\rho)>0$ such that for $w \in \Gamma$,
$$r\|w\|<t_\rho(w),$$ where $t_\rho(w)$ is a translation length of $\rho(w)$ with respect to a metric on $X$, and $$\|w\|=\inf_{g\in \Gamma}|gwg^{-1}|$$ is
the infimum of the word lengths among its conjugates with respect to a fixed generating set. By triangle inequality,
$$t_\rho(w)< R \|w\|$$ for $R$ depending on $\rho$. Hence once a compact set $C$ in the set of primitive stable representations is given, there exist uniform constants $r$ and $R$ on $C$ satisfying the above inequalities. Hence if $[\Phi]\in Out(\Gamma)$ satisfies $[\Phi](C)\cap C\neq \emptyset$, then for $[\rho]$ in this intersection
$$\|\Phi(w)\|\leq\frac{1}{r}t_\rho(\Phi(w))=\frac{1}{r}t_{\rho\circ\Phi}(w)\leq \frac{R}{r} \|w \|.$$ But it is shown that the set of such $[\Phi]$ is finite \cite{Minsky}.
\end{proof}

{\kim Recall that $\mathcal P$ is the set of all bi-infinite geodesics in the Cayley graph of $\Gamma$ lifted from $\overline w$ for all primitive elements $w\in \Gamma$.
Let $\mathcal P_e\subset \mathcal P$ denote the set of bi-infinite primitive geodesics $q:\mathbb Z \to \Gamma$ with $q(0)=e$.
Each $\overline w$ for a primitive word $w$ is lifted to a $\Gamma$-invariant family of bi-infinite geodesics in the Cayley graph.
Hence, due to the $\Gamma$-equivariance, in order to show that a representation $\rho:\Gamma\to G$ is primitive stable, it is sufficient to prove that $\tau_{\rho,x}$ takes all bi-infinite primitive geodesics of $\mathcal P_e$ to uniformly Morse quasigeodesics. 
We will say that $\mathcal P_e$ is \emph{$\sigma_{mod}$-regular} for $\rho$ if the subset of $G$ defined by 
\[ \mathcal P_e^{\rho}=\{ \rho(q(n)) \in G \ | \ q\in \mathcal P_e \text{ and } n\in \mathbb Z\} \]
is $\sigma_{mod}$-regular, i.e., any infinite sequence in $\mathcal P_e^\rho$ is $\sigma_{mod}$-regular. }
Then a necessary and sufficient condition for a representation to be $\sigma_{mod}$-primitive stable is as follows.

\begin{proposition}\label{iffcondition}
Let $\rho :\Gamma \rightarrow G$ be a representation of a non-abelian free group $\Gamma$ {\kim into a semisimple Lie group $G$.} Then $\rho$ is $\sigma_{mod}$-primitive stable if and only if the following holds:
\begin{itemize}
\item[(i)] ${\kim \mathcal P_e}$ is $\sigma_{mod}$-regular for $\rho$.
\item[(ii)] there exists a uniform constant $D>0$ such that for each $q:\mathbb Z\to G$ in ${\mathcal P_e}$, there exists a maximal flat $F_q$ such that $\rho(q(n))\cdot x$ remains at a bounded distance $D$ from the maximal flat $F_q$ for all $n\in\mathbb Z$.
\end{itemize}
\end{proposition}
\begin{proof}[Sketch of proof]
By the definition of primitive stable representation, (i) and (ii) immediately follow if $\rho$ is $\sigma_{mod}$-primitive stable.
Hence it is sufficient to prove the converse. Indeed its proof follows rather easily by the work of Kapovich, Leeb and Porti in \cite[Section 6.4.2]{KLP}.
Suppose that  (i) and (ii) hold for a representation $\rho :\Gamma \rightarrow G$.
{\kim Define a function $\Psi:\mathbb N \to \mathbb R$ by \[\Psi(n)=\min_{q\in \mathcal P_e} d(d_\Delta(x, \rho(q(n))\cdot x), \partial \Delta). \]
By (i), since any infinite sequence in $\mathcal P_e^\rho$ is $\sigma_{mod}$-regular, it easily follows that \[\lim_{n\to \infty}\Psi(n)=\infty.\]
}
In other words, for a given  constant $C>0$, there exists a uniform constant $R=R(\rho, C)$ such that
$$d(d_\Delta (x, \rho(w)\cdot x),\partial \Delta) \geq C$$ for any word $w$ such that {\kim $\rho(w) \in \mathcal P^\rho_e$ }with $|w|\geq R$. 


By a similar argument as in the proofs in \cite[Section 6.4.2]{KLP}, there exist uniform constants $L, A, D, S$ and $\Theta \subset \mathrm{int}(\sigma_{mod})$ such that $\tau_{\rho,x} \circ q$ is an $(L,A,\Theta,D,S)$-local Morse quasigeodesic for every bi-infinite primitive geodesic $q \in \mathcal P_e$. Then it follows from \cite[Theorem 7.18]{KLP} that $\tau_{\rho,x}\circ q$ is an $(L',A',\Theta',D')$-Morse quasigeodesic for some uniform constants $L', A', D'$ and $\Theta'\subset \mathrm{int}(\sigma_{mod})$ for every $q \in \mathcal P_e$. This completes the proof.
\end{proof}

Proposition \ref{thm:regular} implies that every positive representation satisfies (i) in Proposition \ref{iffcondition}. 
To verify that every positive representation is $\sigma_{mod}$-primitive stable, it only remains to check that (ii) in Proposition \ref{iffcondition} holds. 

\begin{theorem}\label{thm:positiveregular} Every positive representation of a compact, connected, orientable surface with one boundary component and negative Euler characteristic is $\sigma_{mod}$-primitive stable.
\end{theorem}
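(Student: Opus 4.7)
The plan is to apply Proposition \ref{iffcondition}, which reduces $\sigma_{mod}$-primitive stability of $\rho$ to the three conditions (i), (ii), (iii). Conditions (i) and (iii) come essentially for free. For any positive representation every non-peripheral loop in $\pi_1(\Sigma)$ maps to a positive hyperbolic element of $\pgl$ (property (2) of Section \ref{2.2}), hence is diagonalizable over $\mathbb{R}$ with eigenvalues of pairwise distinct absolute values; since the boundary curve is the only peripheral conjugacy class when $\Sigma$ has a single boundary component, every primitive element is non-peripheral, yielding (i). Condition (iii) is precisely Proposition \ref{thm:regular} applied to the primitive bi-infinite geodesics in $\mathcal P$. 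The substantive task is therefore (ii): producing a uniform constant $E$ such that, for every $\tilde\omega\in\mathcal P$, the orbit $\tau_{\rho,x}(\tilde\omega)$ stays within distance $E$ of the unique maximal flat $\mathcal F(\sigma^+,\sigma^-)$ connecting the attracting and repelling Weyl chambers of $\rho(\omega)$.

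I would verify (ii) by contradiction, transposing Minsky's strategy from the rank-one setting. Suppose no such $E$ exists; then there exist primitive words $w_n$, vertices $v_n$ on $\tilde w_n$, and numbers $E_n\to\infty$ with $d_X(\rho(v_n)\cdot x, F_n)\geq E_n$, where $F_n=\mathcal F(\sigma_n^+,\sigma_n^-)$ is the maximal flat of $\rho(w_n)$. Replacing $w_n$ by its primitive conjugate $v_n^{-1}w_nv_n$, I translate so that the offending orbit point is $x$ itself and the geodesic $q_n:=v_n^{-1}\tilde w_n$ passes through the identity $e$ in the Cayley graph. Local finiteness of the Cayley graph together with a diagonal extraction produces a subsequential pointwise limit $q_\infty$, a bi-infinite geodesic through $e$. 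The key input is Whitehead's blocking property of the boundary curve $c$ recalled in Section \ref{blocking}: no $q_n$ contains the subword $c^2$, and since avoidance of a finite subword is closed on the space of bi-infinite words, $q_\infty$ also avoids $c^2$.

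The heart of the argument is to exploit the positive structure along $q_\infty$. Let $q_\infty^{\pm}\in\partial_\infty\pi_1(\Sigma)_p$ be the Gromov endpoints of $q_\infty$. Because $q_\infty$ contains no $c^2$, neither endpoint lies in the $\pi_1(\Sigma)$-orbit of the fixed point of $c$ on $\partial_\infty\pi_1(\Sigma)_p$; in particular $q_\infty^{\pm}$ are distinct non-cuspidal points. Positivity and hyperconvexity of the Frenet curve $\xi$ then force $\xi(q_\infty^+)$ and $\xi(q_\infty^-)$ to be a transverse pair of flags, determining a unique maximal flat $F_\infty\subset X$ at bounded distance from $x$. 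On the other hand, the endpoints of $q_n$ converge to $q_\infty^{\pm}$ and the attracting/repelling flags of the conjugated primitive $v_n^{-1}w_nv_n$ are exactly the $\xi$-images of these endpoints; continuity of $\xi$, combined with $\sigma_{mod}$-regularity from Proposition \ref{thm:regular}, forces the flats $F_n':=\rho(v_n)^{-1}F_n$ to Hausdorff-converge to $F_\infty$, so $d_X(x,F_n')$ stays bounded, contradicting $d_X(x,F_n')\geq E_n\to\infty$.

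The main obstacle I expect is the final Hausdorff-convergence step, particularly when $\rho(c)$ is unipotent so that the $F_n$ could \emph{a priori} slip off to infinity as endpoints of $q_n$ accumulate on a parabolic fixed point. What rules this out is precisely the blocking property: since $q_n$ never contains $c^2$, the endpoints of $q_n$ remain uniformly separated (in the cyclic order on $\partial_\infty\pi_1(\Sigma)_p$) from all cusp fixed points, keeping the $\xi$-images in a compact set of transverse flag pairs on which the assignment $(\xi^+,\xi^-)\mapsto\mathcal F(\xi^+,\xi^-)$ is continuous. Promoting this qualitative non-degeneration into the quantitative statement that $F_n'$ stays uniformly close to $x$ (and not merely close to the moving point $\rho(v_n^{-1})\cdot\rho(v_n)x=x$) is the technical crux, and is exactly what the no-$c^2$ blocking delivers through a compactness argument on the finite set of length-$N$ patterns appearing in primitive reduced words.
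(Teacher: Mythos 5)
Your proposal is correct and follows essentially the same route as the paper: reduce to condition (ii) of Proposition \ref{iffcondition}, argue by contradiction after translating the offending orbit points to the basepoint (the paper phrases this as restricting to geodesics through $e$ and then shifting), extract a limit geodesic $q_\infty$ via Arzela--Ascoli, invoke Minsky's blocking property of the boundary word to keep $q_\infty^{\pm}$ distinct and off the cusp orbit, and conclude by continuity and injectivity of $\xi$ away from the boundary points that the maximal flats converge, contradicting the divergence of distances. The ``technical crux'' you flag at the end is handled in the paper simply by subsequential convergence of $q_n^{\pm}$ to the non-cuspidal, distinct pair $q_\infty^{\pm}$, so no further quantitative compactness argument is needed.
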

\begin{proof}
Let $\Sigma$ be a compact, connected, orientable surface with one boundary component and negative Euler characteristic. Let $\rho : \pi_1(\Sigma) \ra \pgl$ be a positive representation with a continuous $\rho$-equivariant positive map $\xi : \partial_\infty \pi_1(\Sigma) \ra \mathrm{Flag}(\sigma_{mod})$.
As mentioned above, it suffices to prove that condition (ii) in Proposition \ref{iffcondition} holds.

We claim that there exists a uniform constant $D=D(\rho,x)$ such that $$d_X(x,P(\xi(q^-),\xi(q^+))) \leq D$$ for all $q \in \mathcal P_e$. 
{\kim Suppose that the claim holds. Let $q:\mathbb Z\to \pi_1(\Sigma)$ be an arbitrary bi-infinite primitive geodesic in $\mathcal P_e$. Then for each $n\in \mathbb Z$, define a map $q[n] :\mathbb Z \to \pi_1(\Sigma)$ by \[ q[n](i)=q(n)^{-1}q(n+i).\] for $i\in \mathbb Z$. Obviously, $q[n]$ is a bi-infinite primitive geodesic and $q[n](0)=e$. Thus $q[n]\in \mathcal P_e$. Applying the claim to $q[n]$, we have \[d_X(x,P(\xi(q[n]^-),\xi(q[n]^+))) \leq D.\]
From the definition of $q[n]$, it directly follows that \[q[n]^-=q(n)\cdot q^- \text{ and } q[n]^+=q(n)\cdot q^+.\] Since $\xi$ is $\rho$-equivariant, we get
\begin{align*}
d_X(q(n)\cdot x, P(\xi(q^-),\xi(q^+)))  &= d_X(x,P(q(n)^{-1} \xi(q^-),q(n)^{-1}\xi(q^+))) \\
&= d_X(x,P(\xi(q[n]^-),\xi(q[n]^+)))   \leq D.
\end{align*}
This implies the condition (ii) in Proposition \ref{iffcondition}. Hence it is sufficient to prove the claim.


We now suppose that the claim dose not hold. Then there exists a sequence $(q_n : \mathbb Z\rightarrow \pi_1(\Sigma))$ of bi-infinite primitive geodesics in $\mathcal P_e$ such that  $$d_X( x, P(\xi(q_n^-),\xi(q_n^+))) \rightarrow \infty \text{ as } n\rightarrow \infty.$$
Applying Arzela-Ascoli theorem to $(q_n )$ which is a family of geodesics with $ q_n(0)=e$, by passing to a subsequence, we may assume that $q_n$ converges to a bi-infinite geodesic $q_\infty$ with $ q_\infty(0)=e$. Then $q_n^+$ and $q_n^-$ converge to $q_\infty^+$ and $q_\infty^-$ respectively.
Clearly $q_\infty^+$ and $q_\infty^-$ are distinct.
}

Let $\mathcal B_\infty(\Sigma)$ be the set of endpoints of the preimages of one boundary curve in the Cayley graph of $\pi_1(\Sigma)$.
If one of $q^\pm_\infty$ is in $\mathcal B_\infty(\Sigma)$, this means that a sequence of primitive elements corresponding to $q_n$ winds more and more around the one boundary component of $\Sigma$. Minsky \cite{Minsky} showed that this never happens due to the blocking property of the one boundary component of $\Sigma$. For this reason, it follows that $q^\pm_\infty \notin \mathcal B_\infty(\Sigma)$.

The $\rho$-equivariant continuous map $\xi :\partial_\infty \pi_1(\Sigma) \rightarrow \flagv$ is a one-to-one map on the complement of $\mathcal B_\infty(\Sigma)$. Since $q^\pm_\infty$ are distinct and $q^\pm_\infty \notin \mathcal B_\infty(\Sigma)$, the sequence of maximal flats $P(\xi(q_n^-),\xi(q_n^+))$ converges to a maximal flat $P(\xi(q_\infty^-),\xi(q_\infty^+))$ and thus $$d_X( x, P(\xi(q_n^-),\xi(q_n^+))) \rightarrow d_X( x, P(\xi(q_\infty^-),\xi(q_\infty^+)))  \text{ as } n \rightarrow \infty.$$ This contradicts the assumption that $d_X( x, P(\xi(q_n^-),\xi(q_n^+))) \rightarrow \infty \text{ as } n \rightarrow \infty.$ Therefore the claim holds.
\end{proof}

We prove Theorem \ref{mainthm} for positive representations and hence Theorem \ref{mainthm} now follows if we prove that the holonomies of convex projective structures are $\sigma_{mod}$-primitive stable.

\begin{proof}[Proof of Theorem \ref{mainthm}]
Let $\Sigma$ be a compact, connected, orientable surface with one boundary component and negative Euler characteristic. Let $\rho : \pi_1(\Sigma) \rightarrow \mathrm{PGL}(3,\mathbb R)$ be the holonomy of a convex projective structure on $\Sigma$. Then $\rho(\pi_1(\Sigma))$ is a discrete subgroup of $\rsl (3,\mathbb R)$ acting on a convex domain $\Omega$ in $\rp^2$ properly and freely. Furthermore it admits a $\rho$-equivariant continuous map $\xi : \partial_\infty \pi_1(\Sigma) \ra \flagv$. 
Note that $\rho(\gamma)$ is hyperbolic for any non-peripheral loop $\gamma \in \pi_1(\Sigma)$ and $\rho(c)$ is either hyperbolic or quasi-hyperbolic or parabolic for a peripheral loop $c$. When $\rho(c)$ is hyperbolic, $\xi$ is a $\rho$-equivariant, antipodal homeomorphism and thus $\rho$ is an Anosov representation. When $\rho(c)$ is quasi-hyperbolic, $\xi$ is a $\rho$-equivariant homeomorphism but not antipodal. When $\rho(c)$ is parabolic, as well-known, $\rho$ is a positive representation. We will show that $\rho$ is primitive stable in either case.

We apply Proposition \ref{iffcondition} to convex projective structures. First, (i) in Proposition \ref{iffcondition} immediately follows from Theorem \ref{cr}. Noting that every convex projective structure on $\Sigma$ admits an equivariant continuous map $\partial_\infty \pi_1(\Sigma) \ra \flagv$ which is a one-to-one map on $\partial_\infty\pi_1(\Sigma) \setminus \mathcal B_\infty(\Sigma)$, a proof for (ii) in Proposition  \ref{iffcondition} is exactly the same as the proof of Theorem \ref{thm:positiveregular}. Therefore by Proposition \ref{iffcondition}, every convex projective structure on $\Sigma$ is $\sigma_{mod}$-primitive stable.
\end{proof}


\end{document}